\newtheorem{theorem}{Theorem}[section]
\newtheorem{lemma}{Lemma}[section]
\newtheorem{definition}{Definition}[section]
\newtheorem{remark}{Remark}[section]
\newtheorem{example}{Example}[section]
\newcommand{\beq}{\begin{equation}}
\newcommand{\eeq}{\end{equation}}
\newcommand{\beqn}{\begin{eqnarray}}
\newcommand{\eeqn}{\end{eqnarray}}
\begin{document}

\title{Existence and exponential stability of positive almost
periodic solution for Nicholson's blowflies models on time
scales\thanks{This work is supported by the National Natural Sciences Foundation of People's Republic of China under Grant
11361072.}}
\author{Yongkun Li\thanks{%
The corresponding author.   Email:
yklie@ynu.edu.cn.} and Bing Li\\
$^a$Department of Mathematics, Yunnan University\\
Kunming, Yunnan 650091,
 People's Republic of China}
\date{}
\maketitle \allowdisplaybreaks
\begin{abstract}
In this paper, we first give a new  definition  of almost periodic time scales, two new definitions of almost periodic functions on time scales and  investigate
some  basic properties of them.
Then, as an application, by using the fixed point theorem in Banach space and the
time scale calculus theory, we obtain some sufficient conditions for
the existence and  exponential stability of  positive almost periodic solutions
for a class of Nicholson's blowflies models on time scales.
Finally, we present an illustrative example to show the
effectiveness of obtained results. Our results show that under a simple condition the
continuous-time Nicholson's blowflies models and their discrete-time
analogue have the same dynamical behaviors.
\end{abstract}
 \textbf{Key words:} Almost periodic solution; Exponential stability;
 Nicholson's blowflies model; Almost periodic time scales.\\
 \textbf{2010 Mathematics Subject Classification:} 34N05; 34K14; 34K20; 92D25.

\allowdisplaybreaks
\section{Introduction}

 \setcounter{section}{1}
\setcounter{equation}{0}
 \indent

To describe the population of the Australian sheep-blowfly and to
agree with the experimental data obtained in \cite{1}, Gurney et
al. \cite{2} proposed the following delay differential equation model:
\begin{equation}\label{e0}
x'(t)=-\delta x(t)+px(t-\tau)e^{-ax(t-\tau)},
\end{equation}
where $p$ is the maximum
per capita daily egg production rate, $1/a$
 is the size at which the blowfly population reproduces at its maximum rate, $\delta$ is the per capita
daily adult death rate, and $\tau$ is the generation time.
Since equation \eqref{e0} explains Nicholson's data of blowfly more
accurately, the model and its modifications have been now refereed
to as Nicholson's Blowflies model.   The theory
of the Nicholsons blowflies equation has made a remarkable progress in
the past forty years with main results scattered in numerous research papers.
Many important results on the qualitative properties of the model such as existence of
positive solutions, positive periodic  or positive almost periodic solutions, persistence, permanence, oscillation
and stability for the classical Nicholson¡¯s model and its generalizations   have been established in the
literature \cite{v5,v12,liu2,v24,v32,3,4,liu,5,6}.
 For
example, to describe the models of marine protected areas and B-cell
chronic lymphocytic leukemia dynamics that are examples of
Nicholson-type delay differential systems, Berezansky et al.
\cite{7} and Wang et al. \cite{8} studied the following
Nicholson-type delay system:
\begin{eqnarray*}
\left\{
\begin{array}{lll}
N'_1(t)
=-\alpha_1(t)N_1(t)+\beta_1(t)N_2(t)+\sum\limits^{m}_{j=1}c_{1j}(t)N_1(t-\tau_{1j}(t))e^{-\gamma_{ij}(t)N_1(t-\tau_{1j}(t))}
,\\
N'_2(t)
=-\alpha_2(t)N_2(t)+\beta_2(t)N_1(t)+\sum\limits^{m}_{j=1}c_{2j}(t)N_2(t-\tau_{1j}(t))e^{-\gamma_{ij}(t)N_2(t-\tau_{1j}(t))}
,
\end{array}
\right.
\end{eqnarray*}
where $\alpha_i, \beta_i, c_{ij}, \gamma_{ij}, \tau_{ij}\in
C(\mathbb{R}, (0,+\infty))$, $i=1,2, j=1,2,\ldots,m$; in \cite{9},
the authors discussed some aspects of the global dynamics for a
Nicholson's blowflies model with patch structure given by
\[
x'_i(t)=-d_ix_i(t)+\sum\limits_{j=1}^na_{ij}x_j(t)
+\sum\limits_{j=1}^m\beta_{ij}x_i(t-\tau_{ij})e^{-x_i(t-\tau_{ij})},
i=1,2,\ldots,n.
\]

In the real world phenomena, since the
almost periodic variation of the environment plays a crucial role in many biological and
ecological dynamical systems and is more frequent and general than the periodic variation
of the environment. Hence, the effects of almost periodic environment on evolutionary
theory have been the object of intensive analysis by numerous authors and some
of these results for Nicholson¡¯s blowflies models  can be found in \cite{10,11,12,13,v18, ni1,ni2,ni3}.

Besides, although most models are described by differential
equations, the discrete-time models governed by difference equations
are more appropriate than the continuous ones when the size of the
population is rarely small, or the population has non-overlapping
generations. Hence, it is also important to study the dynamics of
discrete-time Nicholson's blowflies models. Recently, authors of
\cite{14,15} studied the existence and exponential convergence of
almost periodic solutions for discrete Nicholson's blowflies models,
respectively. In fact, it is troublesome to study the dynamics for
discrete and continuous systems respectively, therefore, it is
significant to study that on time scales, which was initiated by
Stefan Hilger (see \cite{16}) in order to unify continuous and
discrete cases. However, to the best of our knowledge, very few results are
available on the existence and stability of  positive almost periodic
solutions for Nicholson's blowflies models on time scales except
\cite{17}. But \cite{17} only considered the asymptotical stability
of the model and the exponential stability is stronger than
asymptotical stability among different stabilities.

On the other hand, in order to study the almost periodic dynamic equations on time scales, a concept of
almost periodic time scales was proposed in \cite{li1}. Based on this concept, almost periodic
functions \cite{li1}, pseudo almost periodic functions \cite{li3},
almost automorphic functions \cite{r1},   weighted pseudo
almost automorphic functions \cite{li2}, weighted piecewise pseudo almost
automorphic functions \cite{w1} and  almost periodic set-valued functions \cite{set} on  on time scales were defined successively.  Also, some works have been done under the
concept of almost periodic time scales (see \cite{r2,r5,r6,r11,r12,r13,r14,r15}). Although  the concept of
almost periodic time scales in \cite{li1} can unify the
continuous and discrete situations effectively, it is very restrictive. This
excludes many interesting time scales.
Therefore, it is a challenging and important problem in
theories and applications to find  new concepts of periodic time scales (\cite{liwang,wa,lili1,lili3,lili2}).

Motivated by the above discussion, our main purpose of  this paper is firstly to propose
a new   definition  of almost periodic time scales, two new definitions of almost periodic functions on time scales  and study
some basic properties of them.
Then, as an application, we study  the existence and global exponential
stability of positive almost periodic solutions for the following Nicholson's
blowflies model  with patch structure and multiple time-varying
delays on time scales:
\begin{eqnarray}\label{e1}
x_i^\Delta(t)&=&-c_i(t)x_i(t)+\sum\limits_{k=1,k\neq
i}^nb_{ik}(t)x_k(t)\nonumber\\
&&+\sum\limits_{j=1}^n\beta_{ij}(t)x_i(t-\tau_{ij}(t))e^{-\alpha_{ij}(t)x_i(t-\tau_{ij}(t))},\,\,
i=1,2,\ldots,n,
\end{eqnarray}
where $t\in \mathbb{T}$, $\mathbb{T}$ is an almost periodic time
scale, $x_i(t)$ denotes the density of the species in patch $i$,
$b_{ik}(k\neq i)$ is the migration coefficient from patch $k$ to
patch $i$ and the natural growth in each patch is of Nicholson-type.

For convenience, for a positive almost periodic function
$f:\mathbb{T}\rightarrow\mathbb{R}$, we denote
$f^+=\sup\limits_{t\in\mathbb{T}}f(t),
f^-=\inf\limits_{t\in\mathbb{T}}f(t)$. Due to the biological meaning
of \eqref{e1}, we just consider the following initial condition:
\begin{eqnarray}\label{e2}
\varphi_i(s)>0,\,\,s\in
[t_0-\theta,t_0]_{\mathbb{T}},\,t_0\in\mathbb{T},\,i=1,2,\ldots,n,
\end{eqnarray}
where
$\theta=\max\limits_{(i,j)}\sup\limits_{t\in\mathbb{T}}\{\tau_{ij}(t)\}$,
$[t_0-\theta,t_0]_{\mathbb{T}}=[t_0-\theta,t_0]\cap \mathbb{T}$.

This paper is organized as follows: In Section 2, we introduce some
notations and definitions  which
are needed in later sections. In Section 3, we  give a new  definition of almost periodic time scales and two new definitions of almost periodic functions on time scales, and we state and  prove
some basic properties of them. In Section 4, we establish some
sufficient conditions for the existence and  exponential stability of
positive almost periodic solutions of \eqref{e1}. In Section 5, we give
an example to illustrate the feasibility of our results obtained in
previous sections. We draw a conclusion in Section 6.

\section{ Preliminaries}

  \setcounter{equation}{0}

\indent

In this section, we shall first recall some definitions and state some results which are used in what follows.

Let $\mathbb{T}$ be a nonempty closed subset (time scale) of
$\mathbb{R}$. The forward and backward jump operators $\sigma,
\rho:\mathbb{T}\rightarrow\mathbb{T}$ and the graininess
$\mu:\mathbb{T}\rightarrow\mathbb{R}^+$ are defined, respectively,
by
\[
\sigma(t)=\inf\{s\in\mathbb{T}:s>t\},\,\,\,\,
\rho(t)=\sup\{s\in\mathbb{T}:s<t\}\,\,\,\,{\rm
and}\,\,\,\,\mu(t)=\sigma(t)-t.
\]

A point $t\in\mathbb{T}$ is called left-dense if $t>\inf\mathbb{T}$
and $\rho(t)=t$, left-scattered if $\rho(t)<t$, right-dense if
$t<\sup\mathbb{T}$ and $\sigma(t)=t$, and right-scattered if
$\sigma(t)>t$. If $\mathbb{T}$ has a left-scattered maximum $m$,
then $\mathbb{T}^k=\mathbb{T}\setminus\{m\}$; otherwise
$\mathbb{T}^k=\mathbb{T}$. If $\mathbb{T}$ has a right-scattered
minimum $m$, then $\mathbb{T}_k=\mathbb{T}\setminus\{m\}$; otherwise
$\mathbb{T}_k=\mathbb{T}$.

A function $f:\mathbb{T}\rightarrow\mathbb{R}$ is right-dense
continuous provided it is continuous at right-dense point in
$\mathbb{T}$ and its left-side limits exist at left-dense points in
$\mathbb{T}$. If $f$ is continuous at each right-dense point and
each left-dense point, then $f$ is said to be continuous function on
$\mathbb{T}$.

For $y:\mathbb{T}\rightarrow\mathbb{R}$ and $t\in\mathbb{T}^k$, we
define the delta derivative of $y(t)$, $y^\Delta(t)$, to be the
number (if it exists) with the property that for a given
$\varepsilon>0$, there exists a neighborhood $U$ of $t$ such that
\[
|[y(\sigma(t))-y(s)]-y^\Delta(t)[\sigma(t)-s]|<\varepsilon|\sigma(t)-s|
\]
for all $s\in U$.

If $y$ is continuous, then $y$ is right-dense continuous, and if $y$
is delta differentiable at $t$, then $y$ is continuous at $t$.

Let $y$ be right-dense continuous. If $Y^{\Delta}(t)=y(t)$, then we
define the delta integral by
$
\int_a^{t}y(s)\Delta s=Y(t)-Y(a).
$

A function $r:\mathbb{T}\rightarrow\mathbb{R}$ is called regressive
if
$
1+\mu(t)r(t)\neq 0
$
for all $t\in \mathbb{T}^k$. The set of all regressive and
$rd$-continuous functions $r:\mathbb{T}\rightarrow\mathbb{R}$ will
be denoted by $\mathcal{R}=\mathcal{R}(\mathbb{T})=\mathcal{R}(\mathbb{T},\mathbb{R})$. We
define the set
$\mathcal{R}^+=\mathcal{R}^+(\mathbb{T},\mathbb{R})=\{r\in\mathcal{R}:1+\mu(t)r(t)>0,\,\,\forall
t\in\mathbb{T}\}$.

\begin{lemma}$(\cite{18})$  Suppose that $p\in \mathcal{R}^{+}$, then
\begin{itemize}
     \item  [$(i)$]$e_{p}(t,s)>0$, for all $t,s\in\mathbb{T}$;
    \item[$(ii)$]if $p(t)\leq q(t)$ for all $t\geq s, t,s\in\mathbb{T}$, then
$e_{p}(t,s)\leq e_{q}(t,s)$ for all $t \geq s$.
\end{itemize}
\end{lemma}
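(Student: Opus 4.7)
The plan is to work directly from the standard representation of the generalized exponential function, namely
\[
e_p(t,s)=\exp\!\left(\int_s^t \xi_{\mu(\tau)}(p(\tau))\,\Delta\tau\right),
\]
where the cylinder transformation is $\xi_h(z)=\frac{1}{h}\log(1+hz)$ for $h>0$ and $\xi_0(z)=z$. The hypothesis $p\in\mathcal{R}^+$ is precisely what makes the argument of the logarithm positive, so the integrand above is a well-defined real number for every $\tau\in\mathbb{T}$.

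For part (i), I would observe that since $1+\mu(\tau)p(\tau)>0$ for all $\tau$, the value $\xi_{\mu(\tau)}(p(\tau))$ is real, hence the $\Delta$-integral $\int_s^t \xi_{\mu(\tau)}(p(\tau))\,\Delta\tau$ is real (interpreting it in the usual way when $t<s$). Exponentiating a real number yields a strictly positive result, giving $e_p(t,s)>0$ for all $t,s\in\mathbb{T}$.

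For part (ii), the key observation is that $\xi_h$ is strictly increasing in $z$ on the region where $1+hz>0$. Indeed, for $h>0$ one has $\frac{\partial}{\partial z}\xi_h(z)=\frac{1}{1+hz}>0$, while for $h=0$ the function reduces to the identity. Since $p,q\in\mathcal{R}^+$ and $p(\tau)\le q(\tau)$ for $\tau\ge s$, this monotonicity yields $\xi_{\mu(\tau)}(p(\tau))\le \xi_{\mu(\tau)}(q(\tau))$ on $[s,t]_{\mathbb{T}}$. Integrating preserves the inequality, and the monotonicity of the ordinary exponential then gives $e_p(t,s)\le e_q(t,s)$ for $t\ge s$.

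There is not really a hard step here; the result is essentially a translation of two elementary facts (positivity of $\log$ of a positive number, monotonicity of $\xi_h$) through the definition of $e_p$. The only mild subtlety is checking that the argument works uniformly in $\tau$ irrespective of whether $\tau$ is right-dense ($\mu(\tau)=0$, so $\xi_0(z)=z$) or right-scattered ($\mu(\tau)>0$, so $\xi_{\mu(\tau)}$ is the logarithmic form); both cases are covered by the single monotonicity computation above. If one wished to avoid invoking the explicit formula for $e_p$, an alternative plan would be to use the dynamic equation $y^\Delta=p(t)y$, $y(s)=1$ satisfied by $e_p(\cdot,s)$ together with a time-scale comparison principle, but the cylinder-transformation route is more direct.
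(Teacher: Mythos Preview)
Your argument via the cylinder transformation is correct and is precisely the standard derivation. Note that the paper does not actually supply a proof of this lemma: it is quoted from Bohner--Peterson \cite{18} without argument, so there is nothing in the paper to compare against, but your route matches the one in that reference. One small point worth making explicit in your write-up: the lemma hypothesizes only $p\in\mathcal{R}^+$, not $q\in\mathcal{R}^+$, yet you invoke the latter when applying the monotonicity of $\xi_h$; this is harmless because $q(\tau)\ge p(\tau)$ together with $\mu(\tau)\ge 0$ forces $1+\mu(\tau)q(\tau)\ge 1+\mu(\tau)p(\tau)>0$, so $q\in\mathcal{R}^+$ automatically.
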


\begin{definition}\cite{fink} A subset $S$ of $\mathbb{R}$ is called relatively dense if there exists a positive
number $L$ such that $[a, a + L] \cap S  \neq \phi$ for all $a\in \mathbb{R}$. The number $L$ is called the inclusion
length.
\end{definition}
\begin{definition}\label{def21}\cite{li1}  A time scale $\mathbb{T}$ is called an almost periodic time scale if
\begin{eqnarray*}
\Pi=\big\{\tau\in\mathbb{R}: t\pm\tau\in\mathbb{T}, \forall t\in{\mathbb{T}}\big\}\neq \{0\}.
\end{eqnarray*}
\end{definition}

The following definition is a slightly modified version of Definition 3.10 in \cite{li1}.
\begin{definition}\label{defli2}
Let $\mathbb{T}$ be an almost periodic time scale. A function  $f\in
C(\mathbb{T}\times D,\mathbb{E}^n)$ is  called an almost
periodic function in $t\in \mathbb{T}$ uniformly for $x\in D$ if the
$\varepsilon$-translation set of $f$
$$E\{\varepsilon,f,S\}=\{\tau\in\Pi:|f(t+\tau,x)-f(t,x)|<\varepsilon,\,\,
\forall (t,x)\in   \mathbb{T}\times S\}$$ is relatively
dense  for all $\varepsilon>0$ and   for each
compact subset $S$ of $D$; that is, for any given $\varepsilon>0$
and each compact subset $S$ of $D$, there exists a constant
$l(\varepsilon,S)>0$ such that each interval of length
$l(\varepsilon,S)$ contains   a $\tau(\varepsilon,S)\in
E\{\varepsilon,f,S\}$ such that
\begin{equation*}
|f(t+\tau,x)-f(t,x)|<\varepsilon, \,\,\forall (t,x)\in
\mathbb{T}\times S.
\end{equation*}
   $\tau$ is called the $\varepsilon$-translation number of $f$.
\end{definition}

\section{Almost periodic time scales and almost periodic functions on time scales}
  \setcounter{equation}{0}
\indent

In this section, we will give a new definition  of almost periodic time scales and two new definitions of almost periodic functions on time scales, and we  will investigate
some basic properties of them. Our   new  definition of almost periodic time scales is as follows:
\begin{definition}\label{def31}
A time scale $\mathbb{T}$ is called an almost periodic time scale if the set
$$\Pi:=\big\{\tau\in \mathbb{T}:\mathbb{T}_\tau\neq \emptyset\quad and \quad\mathbb{T}_\tau\neq \{0\}\big\}\neq\{0\},$$
where $\mathbb{T}_\tau=\mathbb{T}\cap\{\mathbb{T}-\tau\}=\mathbb{T}\cap \{t-\tau: t\in \mathbb{T}\}$,  satisfies
\begin{itemize}
  \item [$(i)$]   $\Pi\neq \emptyset$,
  \item [$(ii)$] if $\tau_1,\tau_2\in \Pi,$ then $\tau_1\pm \tau_2\in \Pi$,
  \item [$(iii)$] $\widetilde{\mathbb{T}}:=\mathbb{T}(\Pi)=\bigcap\limits_{\tau\in\Pi}\mathbb{T}_\tau\neq \emptyset$.
\end{itemize}
\end{definition}
Clearly, if $t\in \mathbb{T}_\tau$, then $t+\tau\in \mathbb{T}.$ If $t\in \widetilde{\mathbb{T}}$, then $t+\tau\in \mathbb{T}$ for $\tau\in \Pi$.
\begin{remark}\label{ar1}
Obviously,  if $\mathbb{T}$ is an almost periodic time scale under Definition \ref{def31}, then
$\inf\mathbb{T}=-\infty$ and $\sup\mathbb{T}=+\infty.$ If $\mathbb{T}$ is an almost periodic time scale under Definition \ref{def21}, then $\mathbb{T}$ is also an almost periodic time scale under Definition \ref{def31}
and in
this case, $\widetilde{\mathbb{T}}=\mathbb{T}$.
\end{remark}

\begin{example}
Let $\mathbb{T}=\mathbb{Z}\cup\{\frac{1}{4}\}$. For every $\tau\in\mathbb{Z}$, we have $\mathbb{T}_{\tau}=\mathbb{Z}$
and $\mathbb{T}_{\frac{1}{4}}=\{0\}$. Hence $\Pi=\mathbb{Z}$ and $\widetilde{\mathbb{T}}=\bigcap_{\tau\in\Pi}\mathbb{T}_{\tau}=\mathbb{Z}\neq\emptyset$. So, $\mathbb{T}$
is an almost periodic time scale under Definition \ref{def31} but it is not an almost periodic time scale under Definition \ref{def21}.
\end{example}

\begin{lemma}\label{lem31}
If $\mathbb{T}$ is an almost periodic time scales under Definition \ref{def31}, then  $\widetilde{\mathbb{T}}$ is an almost periodic time scale under Definition \ref{def21}.
\end{lemma}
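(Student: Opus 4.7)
The plan is to verify two things: first that $\widetilde{\mathbb{T}}$ is itself a valid time scale (a nonempty closed subset of $\mathbb{R}$), and second that there is a nonzero $\tau$ such that $t\pm\tau\in\widetilde{\mathbb{T}}$ for every $t\in\widetilde{\mathbb{T}}$, which is the content of Definition \ref{def21}.

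The nonemptiness of $\widetilde{\mathbb{T}}$ is immediate from axiom $(iii)$ of Definition \ref{def31}. For closedness, I would observe that for each $\tau\in\Pi$, the set $\mathbb{T}-\tau=\{t-\tau:t\in\mathbb{T}\}$ is closed in $\mathbb{R}$ (a translate of the closed set $\mathbb{T}$), so $\mathbb{T}_\tau=\mathbb{T}\cap(\mathbb{T}-\tau)$ is closed, and $\widetilde{\mathbb{T}}=\bigcap_{\tau\in\Pi}\mathbb{T}_\tau$ is therefore closed as an arbitrary intersection of closed sets.

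The key step is the translation property. I would first unpack the definition of $\mathbb{T}_\sigma$: an element $x$ belongs to $\mathbb{T}_\sigma$ if and only if $x\in\mathbb{T}$ and $x+\sigma\in\mathbb{T}$. Consequently, $t\in\widetilde{\mathbb{T}}$ is equivalent to: $t\in\mathbb{T}$ and $t+\sigma\in\mathbb{T}$ for every $\sigma\in\Pi$. Next, I would note that axiom $(ii)$ forces $0\in\Pi$ (take $\tau_1=\tau_2$) and hence $-\tau\in\Pi$ whenever $\tau\in\Pi$, so $\Pi$ is a subgroup of $(\mathbb{R},+)$ contained in $\mathbb{T}$. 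Now fix any $\tau\in\Pi$ and any $t\in\widetilde{\mathbb{T}}$. To show $t+\tau\in\widetilde{\mathbb{T}}$, I need $t+\tau\in\mathbb{T}$ (which follows by applying the defining property of $\widetilde{\mathbb{T}}$ with $\sigma=\tau$) and $(t+\tau)+\sigma=t+(\tau+\sigma)\in\mathbb{T}$ for every $\sigma\in\Pi$; but $\tau+\sigma\in\Pi$ by axiom $(ii)$, so this follows again from $t\in\widetilde{\mathbb{T}}$. The argument for $t-\tau\in\widetilde{\mathbb{T}}$ is identical using $-\tau\in\Pi$ and $\sigma-\tau\in\Pi$.

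Putting everything together, for any $\tau\in\Pi$ one has $t\pm\tau\in\widetilde{\mathbb{T}}$ for all $t\in\widetilde{\mathbb{T}}$, so $\Pi$ is contained in the translation set of $\widetilde{\mathbb{T}}$ appearing in Definition \ref{def21}. Since $\Pi\neq\{0\}$ by hypothesis, that translation set is also not equal to $\{0\}$, which finishes the proof. I do not expect a real obstacle here: once the set-theoretic characterization $x\in\mathbb{T}_\sigma\Leftrightarrow x,x+\sigma\in\mathbb{T}$ is made explicit, everything reduces to using the group-like closure of $\Pi$ under $\pm$.
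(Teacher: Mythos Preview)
Your argument is correct and rests on the same key observation as the paper's proof: closure of $\Pi$ under addition guarantees that $t+(\tau+\sigma)\in\mathbb{T}$ whenever $t\in\widetilde{\mathbb{T}}$ and $\tau,\sigma\in\Pi$, so $t+\tau\in\widetilde{\mathbb{T}}$. The paper packages this as a proof by contradiction (assuming $t_0+\tau\notin\widetilde{\mathbb{T}}$ and producing a clash via some $\tau_{t_0}\in\Pi$) and does not verify that $\widetilde{\mathbb{T}}$ is closed; your direct presentation is cleaner and more complete on that point.
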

\begin{proof}
By contradiction, suppose that there exists a $t_0\in \widetilde{\mathbb{T}}$ such that for every $\tau\in\Pi\setminus\{0\}$, $t_0+\tau\notin \widetilde{\mathbb{T}}$ or $t_0-\tau\notin \widetilde{\mathbb{T}}$.

\textbf{Case (i)}
If $t_0+\tau\notin \widetilde{\mathbb{T}}$, then there exists a ${\tau_{t_0}}\in \Pi$
such that $t_0+\tau\notin\mathbb{T}_{{\tau_{t_0}}}$. On  one hand, since $t_0+\tau\in\mathbb{T}$,
$t_0+\tau+\tau_{t_0}\notin\mathbb{T}$. On the other hand, since $t_0\in \widetilde{\mathbb{T}}$ and $\tau+\tau_{t_0}\in\Pi$,
$t_0+\tau+\tau_{t_0}\in\mathbb{T}$. This is a contradiction.

\textbf{Case (ii)}
If $t_0-\tau\notin \widetilde{\mathbb{T}}$, then there exists a ${\tilde{\tau}_{t_0}}\in \Pi$
such that $t_0-\tau\notin\mathbb{T}_{{\tilde{\tau}_{t_0}}}$. On  one hand, since $t_0-\tau\in\mathbb{T}$,
$t_0-\tau+\tilde{\tau}_{t_0}\notin\mathbb{T}$. On the other hand, since $t_0\in \widetilde{\mathbb{T}}$ and $-\tau+\tilde{\tau}_{t_0}\in\Pi$,
$t_0-\tau+\tilde{\tau}_{t_0}\in\mathbb{T}$. This is a contradiction.

Therefore,   for every $t\in \widetilde{\mathbb{T}}$, there exists a $\tau\in \Pi\setminus\{0\}$ such that $t\pm\tau\in \widetilde{\mathbb{T}}$.
Hence, $T$ is an almost periodic time scale under Definition \ref{def21}. The proof is complete.
\end{proof}

Throughout this section, $\mathbb{E}^{n}$ denotes $\mathbb{R}^{n}$ or
$\mathbb{C}^{n}$, $D$ denotes an open set in $\mathbb{E}^{n}$ or
$D=\mathbb{E}^{n}$, $S$ denotes an arbitrary compact subset of $D$.

 From \cite{li1}, under Definitions \ref{def21} and \ref{defli2}, we know that  if we
 denote by $BUC(\mathbb{T}\times D,\mathbb{R}^{n})$ the
collection of all bounded uniformly continuous functions from $\mathbb{T}\times D$ to $\mathbb{R}^{n}$, then
\begin{equation}\label{g1}
AP(\mathbb{T}\times D,\mathbb{R}^{n})\subset BUC(\mathbb{T}\times D,\mathbb{R}^{n}),
\end{equation}
where $AP(\mathbb{T}\times D,\mathbb{R}^{n})$  are  the collection of all almost periodic functions in $t\in \mathbb{T}$ uniformly for $x\in D$. It is well known that  if we let $\mathbb{T}=\mathbb{R}$ or $\mathbb{Z}$, \eqref{g1} is  valid.
So, for simplicity, we  give the following definition:
\begin{definition}\label{def32}
Let $\mathbb{T}$ be an almost periodic time scale under sense of Definition \ref{def31}. A function  $f\in
BUC(\mathbb{T}\times D,\mathbb{E}^n)$  is  called an almost
periodic function in $t\in \mathbb{T}$ uniformly for $x\in D$ if the
$\varepsilon$-translation set of $f$
$$E\{\varepsilon,f,S\}=\{\tau\in\Pi:|f(t+\tau,x)-f(t,x)|<\varepsilon,\quad\forall  (t,x)\in   \widetilde{\mathbb{T}}\times S\}$$ is  relatively
dense  for all $\varepsilon>0$ and   for each
compact subset $S$ of $D$; that is, for any given $\varepsilon>0$
and each compact subset $S$ of $D$, there exists a constant
$l(\varepsilon,S)>0$ such that each interval of length
$l(\varepsilon,S)$ contains   a $\tau(\varepsilon,S)\in
E\{\varepsilon,f,S\}$ such that
\begin{equation*}
|f(t+\tau,x)-f(t,x)|<\varepsilon, \quad\forall (t,x)\in
\widetilde{\mathbb{T}}\times S.
\end{equation*}
This   $\tau$ is called the $\varepsilon$-translation number of $f$.
\end{definition}
\begin{remark}
If  $\mathbb{T}=\mathbb{R}$, then $\widetilde{\mathbb{T}}=\mathbb{R}$, in
this case,  Definition \ref{def32} is actually equivalent to the definition of the uniformly almost periodic functions in Ref.
\cite{fink}.
If $\mathbb{T}=\mathbb{Z}$, then $\widetilde{\mathbb{T}}=\mathbb{Z}$, in
this case,  Definition \ref{def32} is actually equivalent to the definition of the uniformly almost periodic sequences  in Refs.
\cite{gz1,gz2}.
\end{remark}

For convenience, we  denote by $AP(\mathbb{T}\times D,\mathbb{E}^n)$ the set of all  functions that are almost periodic in
$t$ uniformly for $x\in D$ and denote by  $AP(\mathbb{T})$ the set of all  functions that are almost periodic in
$t\in \mathbb{T}$,  and introduce some notations:
 Let
$\alpha=\{\alpha_{n}\}$ and $\beta=\{\beta_{n}\}$ be two sequences.
Then $\beta\subset\alpha$ means that $\beta$ is a subsequence of
$\alpha$; $\alpha+\beta=\{\alpha_{n}+\beta_{n}\};
-\alpha=\{-\alpha_{n}\}$; and $\alpha$ and $\beta$ are common
subsequences of $\alpha^{'}$ and $\beta^{'}$, respectively, means
that $\alpha_{n}=\alpha^{'}_{n(k)}$ and $\beta_{n}=\beta^{'}_{n(k)}$
for some given function $n(k)$.
We  introduce the translation operator $T$,
$T_{\alpha}f(t,x)=g(t,x)$ means that
$g(t,x)=
\lim\limits_{n\rightarrow+\infty}f(t+\alpha_{n},x)$ and is
written only when the limit exists. The mode of convergence,
e.g. pointwise, uniform, etc., will be specified at each use of the
symbol.

Similar to the proofs of Theorem 3.14, Theorem 3.21 and Theorem 3.22 in \cite{li1}, respectively, one can prove the following three theorems.
\begin{theorem}\label{thm35}
Let $f\in UBC(\mathbb{T}\times D,\mathbb{E}^{n}),$ if for any
sequence $\alpha^{'}\subset\Pi$, there exists
$\alpha\subset\alpha^{'}$ such that $T_{\alpha}f$ exists
uniformly on $\widetilde{\mathbb{T}}\times S$, then $f\in AP(\mathbb{T}\times D,\mathbb{E}^n)$.
\end{theorem}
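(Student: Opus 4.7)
The plan is to mimic the classical Bochner-type characterization of almost periodic functions and argue by contradiction. Suppose $f\notin AP(\mathbb{T}\times D,\mathbb{E}^n)$. Then by Definition \ref{def32} there exist $\varepsilon_{0}>0$ and a compact set $S\subset D$ for which $E\{\varepsilon_{0},f,S\}$ fails to be relatively dense in $\mathbb{R}$; equivalently, for every $L>0$ one can find an interval of length $L$ in $\mathbb{R}$ disjoint from $E\{\varepsilon_{0},f,S\}$. The goal is to manufacture, out of this failure, a sequence in $\Pi$ which contradicts the sequential-compactness hypothesis.

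Starting from the failure above, I would build inductively a sequence $\{\alpha_{n}\}\subset\Pi$ such that $\alpha_{m}-\alpha_{n}\notin E\{\varepsilon_{0},f,S\}$ whenever $m\neq n$. Choose $\alpha_{1}\in\Pi$ arbitrarily; having selected $\alpha_{1},\ldots,\alpha_{n}$, take $L_{n}>4\max_{i\leq n}|\alpha_{i}|$ and an interval $I_{n}=(a_{n},a_{n}+L_{n})$ with $I_{n}\cap E\{\varepsilon_{0},f,S\}=\emptyset$, and pick $\alpha_{n+1}\in\Pi\cap(a_{n}+L_{n}/4,\,a_{n}+3L_{n}/4)$. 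Since $|\alpha_{i}|<L_{n}/4$ for $i\leq n$, each difference $\alpha_{n+1}-\alpha_{i}$ falls inside $I_{n}$, so $\alpha_{n+1}-\alpha_{i}\notin E\{\varepsilon_{0},f,S\}$; by property (ii) of Definition \ref{def31}, $\alpha_{n+1}-\alpha_{i}\in\Pi$.

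Next I would apply the hypothesis to $\{\alpha_{n}\}\subset\Pi$: some subsequence $\{\alpha_{n_{k}}\}$ has $T_{\alpha_{n_{k}}}f$ converging uniformly on $\widetilde{\mathbb{T}}\times S$, so $\{f(\cdot+\alpha_{n_{k}},\cdot)\}$ is uniformly Cauchy there. The crucial move is the substitution $s=t+\alpha_{n_{l}}$, which legitimately runs $s$ over all of $\widetilde{\mathbb{T}}$ because $\widetilde{\mathbb{T}}+\Pi\subseteq\widetilde{\mathbb{T}}$. Indeed, if $t\in\widetilde{\mathbb{T}}=\bigcap_{\tau\in\Pi}\mathbb{T}_{\tau}$ and $\tau_{0}\in\Pi$, then for every $\tau\in\Pi$ one has $\tau_{0}+\tau\in\Pi$ by (ii), so $t+(\tau_{0}+\tau)\in\mathbb{T}$ and hence $t+\tau_{0}\in\mathbb{T}_{\tau}$; since $-\alpha_{n_{l}}\in\Pi$ as well, this substitution is a bijection of $\widetilde{\mathbb{T}}$ onto itself. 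Uniform Cauchyness then yields
\[
|f(s+(\alpha_{n_{k}}-\alpha_{n_{l}}),x)-f(s,x)|<\varepsilon_{0}
\]
for all large $k,l$ and every $(s,x)\in\widetilde{\mathbb{T}}\times S$, so $\alpha_{n_{k}}-\alpha_{n_{l}}\in E\{\varepsilon_{0},f,S\}$ for such $k\neq l$, contradicting the construction.

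I expect the main technical obstacle to be the inductive extraction of $\alpha_{n+1}$ from $\Pi\cap(a_{n}+L_{n}/4,\,a_{n}+3L_{n}/4)$: this step forces $\Pi$ to be relatively dense in $\mathbb{R}$ with some inclusion length $M$, after which one simply enlarges $L_{n}$ beyond $4M$ to guarantee a $\Pi$-point in the central third of $I_{n}$. The density of $\Pi$ has to be extracted from the standing assumption that $\mathbb{T}$ is almost periodic in the sense of Definition \ref{def31} (otherwise no non-trivial almost periodic function exists at all). Apart from this, the proof is parallel to the classical Bochner argument, with $\widetilde{\mathbb{T}}$ playing the role of $\mathbb{R}$ and property (ii) of $\Pi$ supplying the group-like closure needed for the Cauchy-to-translation-number passage.
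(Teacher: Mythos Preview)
Your argument is correct and is precisely the classical Bochner contradiction argument that the paper has in mind (it defers the proof to Theorem 3.14 of \cite{li1}, which is exactly this construction adapted to time scales). Your verification that $\widetilde{\mathbb{T}}+\Pi\subseteq\widetilde{\mathbb{T}}$, and hence that the substitution $s=t+\alpha_{n_l}$ is a bijection of $\widetilde{\mathbb{T}}$, is the one point that is genuinely new in the present setting and you handle it correctly.

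One remark on the obstacle you flag: the relative density of $\Pi$ is in fact immediate from condition (ii) of Definition~\ref{def31}. Since $\Pi$ is closed under $\pm$ and $\Pi\neq\{0\}$, it is a non-trivial additive subgroup of $\mathbb{R}$; for any nonzero $\tau\in\Pi$ it therefore contains $\tau\mathbb{Z}$, so $\Pi$ is relatively dense with inclusion length $|\tau|$. With this in hand your inductive extraction of $\alpha_{n+1}$ goes through with no further work, and the rest of the argument is exactly as you describe.
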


\begin{theorem}\label{thm310}
If $f\in AP(\mathbb{T}\times D,\mathbb{E}^n)$, then
for any $\varepsilon>0$, there exists a positive constant
$L=L(\varepsilon,S)$,  for any $a\in\mathbb{R}$, there exist a
constant $\eta>0$ and $\alpha\in\mathbb{R}$ such that
$\big([\alpha,\alpha+\eta]\cap\Pi\big)\subset[a,a+L]$ and
$\big([\alpha,\alpha+\eta]\cap\Pi\big)\subset
E(\varepsilon,f,S)$.
\end{theorem}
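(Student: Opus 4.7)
The plan is to combine two ingredients: the relative denseness of the $\varepsilon/2$-translation set supplied by Definition \ref{def32}, and the uniform continuity of $f$ (which is guaranteed because $AP(\mathbb{T}\times D,\mathbb{E}^n)\subset BUC(\mathbb{T}\times D,\mathbb{E}^n)$). Roughly: any $\tau_{0}\in\Pi$ that is a genuine $\varepsilon/2$-translation number will remain an $\varepsilon$-translation number after a sufficiently small real-line perturbation, as long as the perturbed value is still in $\Pi$. So an entire short real interval around $\tau_{0}$, intersected with $\Pi$, will consist of $\varepsilon$-translation numbers.

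More concretely, I would proceed as follows. First, fix $\varepsilon>0$ and the compact set $S\subset D$. Using uniform continuity of $f$ on $\mathbb{T}\times S$, choose $\delta=\delta(\varepsilon,S)>0$ such that $|f(t',x)-f(t,x)|<\varepsilon/2$ whenever $t,t'\in\mathbb{T}$, $|t-t'|<\delta$, and $x\in S$. Second, apply the almost periodicity (Definition \ref{def32}) to the pair $(\varepsilon/2,S)$ to obtain an inclusion length $l_{0}=l(\varepsilon/2,S)$ such that every real interval of length $l_{0}$ contains some $\tau_{0}\in E\{\varepsilon/2,f,S\}\subset\Pi$. Set $L:=l_{0}+\delta$; this will be the required $L(\varepsilon,S)$.

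Now given any $a\in\mathbb{R}$, apply relative denseness to the interval $[a+\delta/2,\,a+\delta/2+l_{0}]$ to pick $\tau_{0}\in\Pi$ in this interval with $\tau_{0}\in E\{\varepsilon/2,f,S\}$. Define $\alpha:=\tau_{0}-\delta/2$ and $\eta:=\delta$. A direct check gives $a\le\alpha$ and $\alpha+\eta\le a+l_{0}+\delta=a+L$, hence $[\alpha,\alpha+\eta]\subset[a,a+L]$, which already implies the first containment $([\alpha,\alpha+\eta]\cap\Pi)\subset[a,a+L]$. For the second containment, take any $\tau\in[\alpha,\alpha+\eta]\cap\Pi$; then $|\tau-\tau_{0}|<\delta$ and, crucially, for every $t\in\widetilde{\mathbb{T}}$ and $x\in S$ both $t+\tau$ and $t+\tau_{0}$ lie in $\mathbb{T}$ because $\tau,\tau_{0}\in\Pi$. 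Splitting via the triangle inequality
\begin{equation*}
|f(t+\tau,x)-f(t,x)|\le |f(t+\tau,x)-f(t+\tau_{0},x)|+|f(t+\tau_{0},x)-f(t,x)|,
\end{equation*}
the first term is bounded by $\varepsilon/2$ by the uniform continuity choice of $\delta$ applied at the points $t+\tau,t+\tau_{0}\in\mathbb{T}$, and the second term is bounded by $\varepsilon/2$ because $\tau_{0}\in E\{\varepsilon/2,f,S\}$. Hence $\tau\in E\{\varepsilon,f,S\}$, completing the proof.

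The only non-routine point is the interaction between $\Pi$ (a subset of $\mathbb{T}$, possibly sparse) and the real-line perturbation argument. The subtlety is ensuring that uniform continuity on $\mathbb{T}\times D$ can be invoked at the two shifted points $t+\tau$ and $t+\tau_{0}$; this is where property $(ii)$ in Definition \ref{def31}, together with the hypothesis $t\in\widetilde{\mathbb{T}}$, is used to keep both shifts inside $\mathbb{T}$. Everything else is elementary estimation.
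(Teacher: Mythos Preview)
Your argument is correct and is essentially the classical Bohr--Fink perturbation argument (uniform continuity in $t$ plus an $\varepsilon/2$-translation number), which is exactly what the paper invokes by declaring the proof ``similar to the proof of Theorem~3.21 in \cite{li1}''. The only adaptation required in the present setting---checking that both shifted arguments $t+\tau$ and $t+\tau_{0}$ remain in $\mathbb{T}$ when $t\in\widetilde{\mathbb{T}}$ and $\tau,\tau_{0}\in\Pi$---is precisely the point you isolate at the end, and it is handled correctly via the defining property of $\widetilde{\mathbb{T}}$ stated after Definition~\ref{def31}.
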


\begin{theorem}\label{thm311}
If $f,g\in AP(\mathbb{T}\times D,\mathbb{E}^n)$,  then
for any $\varepsilon>0$, $E(f,\varepsilon,S)\cap
E(g,\varepsilon,S)$ is  nonempty relatively dense.
\end{theorem}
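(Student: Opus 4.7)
The plan is to deduce the result by repackaging $f$ and $g$ into a single vector-valued function, showing that this combined function is almost periodic, and then reading off the joint translation numbers from its $\varepsilon$-translation set. This is the standard route for the corresponding statement in the classical Bohr--Fink theory, and it adapts to the present setting because all the ingredients (Theorem 3.5 and the relative density of the $\varepsilon$-translation set built into Definition 3.2) have been set up uniformly on $\widetilde{\mathbb{T}}\times S$.

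First I would define $h:\mathbb{T}\times D\to \mathbb{E}^{2n}$ by $h(t,x)=(f(t,x),g(t,x))$, equipped with the norm $\|(u,v)\|=\max\{|u|,|v|\}$. Since $f,g\in AP(\mathbb{T}\times D,\mathbb{E}^n)\subset BUC(\mathbb{T}\times D,\mathbb{E}^n)$, it is immediate that $h\in BUC(\mathbb{T}\times D,\mathbb{E}^{2n})$. Next I would verify the hypothesis of Theorem 3.5 for $h$: given any sequence $\alpha'\subset\Pi$, use the (Bochner-type) converse of Theorem 3.5, which is the companion normality criterion established in the same spirit as in \cite{li1}, to extract $\alpha''\subset\alpha'$ with $T_{\alpha''}f$ converging uniformly on $\widetilde{\mathbb{T}}\times S$. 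Apply the same extraction to $g$ along $\alpha''$ to obtain $\alpha\subset\alpha''$ with $T_\alpha g$ converging uniformly on $\widetilde{\mathbb{T}}\times S$. Since uniform convergence is preserved under passage to a subsequence, $T_\alpha f$ still converges uniformly, and therefore $T_\alpha h$ converges uniformly on $\widetilde{\mathbb{T}}\times S$. By Theorem 3.5 we conclude $h\in AP(\mathbb{T}\times D,\mathbb{E}^{2n})$.

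With this in hand, fix $\varepsilon>0$ and apply Definition 3.2 to $h$: the set $E(\varepsilon,h,S)$ is nonempty relatively dense in $\Pi$. For any $\tau\in E(\varepsilon,h,S)$ and any $(t,x)\in\widetilde{\mathbb{T}}\times S$, the inequality
\[
\max\bigl\{|f(t+\tau,x)-f(t,x)|,\;|g(t+\tau,x)-g(t,x)|\bigr\}=\|h(t+\tau,x)-h(t,x)\|<\varepsilon
\]
forces $\tau\in E(\varepsilon,f,S)$ and $\tau\in E(\varepsilon,g,S)$ simultaneously. Hence $E(\varepsilon,h,S)\subset E(\varepsilon,f,S)\cap E(\varepsilon,g,S)$, and the intersection inherits nonemptiness and relative density from $E(\varepsilon,h,S)$.

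The main technical obstacle is the diagonal extraction step, which requires the converse direction of Theorem 3.5: that every $f\in AP(\mathbb{T}\times D,\mathbb{E}^n)$ admits, for every sequence in $\Pi$, a subsequence along which $T_\alpha f$ converges uniformly on $\widetilde{\mathbb{T}}\times S$. This is the companion to the stated sufficiency and is proved by the same translation-compactness argument transplanted from \cite{li1} to $\widetilde{\mathbb{T}}$; as the paper indicates, the proofs of the results in this section parallel those of Theorems 3.14, 3.21, and 3.22 in \cite{li1}, and this converse is exactly the ingredient of that parallel development needed to close the loop here.
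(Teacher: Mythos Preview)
Your argument is correct. The vectorization $h=(f,g)$ followed by the Bochner-type normality criterion is a clean way to obtain the common $\varepsilon$-translation set, and you correctly flag the one ingredient not stated verbatim in the paper: the \emph{converse} of Theorem~3.5 (almost periodicity $\Rightarrow$ normality). That direction is indeed the routine half of the Bochner characterization and is part of the development in \cite{li1} that the paper is importing, so your appeal to it is legitimate, but it is worth noting that the paper only records the sufficiency direction explicitly.

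Your route differs from the one the paper points to. The paper does not write out a proof; it declares the result analogous to Theorem~3.22 of \cite{li1}, which sits immediately after the ``interval of translation numbers'' lemma (the analogue of Theorem~\ref{thm310} here). The intended argument is therefore the direct Bohr-style one: use uniform continuity of $f$ and $g$ on $\widetilde{\mathbb{T}}\times S$ to pick $\delta>0$ so that shifts by at most $\delta$ perturb both functions by less than $\varepsilon/2$, then invoke Theorem~\ref{thm310} for $f$ and for $g$ (at level $\varepsilon/2$) to locate, inside every sufficiently long interval, an entire subinterval of $\Pi$ consisting of $\varepsilon/2$-translation numbers for $f$; finally pick an $\varepsilon/2$-translation number of $g$ within $\delta$ of that subinterval and conclude that it lies in $E(\varepsilon,f,S)\cap E(\varepsilon,g,S)$. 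This stays entirely within Definition~\ref{def32} and uses only results already stated in the paper. Your Bochner approach is more conceptual and extends at once to any finite family $f_1,\dots,f_m$, at the cost of importing the unstated normality direction; the paper's approach is more self-contained relative to what is actually written down.
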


According to Definition \ref{def32}, one can easily prove
\begin{theorem}\label{thm312}
If $f\in AP(\mathbb{T}\times D,\mathbb{E}^n)$, then
for any $\alpha\in\mathbb{R}, b\in\Pi$,  functions $\alpha
f,f(t+b,\cdot)\in AP(\mathbb{T}\times D,\mathbb{E}^n)$.
\end{theorem}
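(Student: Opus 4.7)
The plan is to verify both parts directly from Definition~\ref{def32}, reducing them to the almost periodicity of $f$ by an appropriate inclusion between $\varepsilon$-translation sets. The key preliminary observation, which I would record at the start, is that $\Pi$ is a subgroup of $(\mathbb{R},+)$: property (ii) applied with $\tau_1=\tau_2$ gives $0\in\Pi$, and then applied with $\tau_1=0$ gives $-\tau\in\Pi$ whenever $\tau\in\Pi$. Consequently, $\widetilde{\mathbb{T}}$ is invariant under $\Pi$-translation: for $t\in\widetilde{\mathbb{T}}$ and $b\in\Pi$, any $\sigma\in\Pi$ satisfies $b+\sigma\in\Pi$, so $t+(b+\sigma)\in\mathbb{T}$, which means $t+b\in\mathbb{T}_\sigma$ for every $\sigma\in\Pi$, hence $t+b\in\widetilde{\mathbb{T}}$.

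For $\alpha f$, the case $\alpha=0$ is trivial. For $\alpha\neq 0$, I would first note that $\alpha f\in BUC(\mathbb{T}\times D,\mathbb{E}^n)$ because scalar multiplication preserves boundedness and uniform continuity. Then, for any $\varepsilon>0$ and compact $S\subset D$, observe the inclusion
\[
E\{\varepsilon/|\alpha|,f,S\}\;\subset\;E\{\varepsilon,\alpha f,S\},
\]
since $|\alpha f(t+\tau,x)-\alpha f(t,x)|=|\alpha|\,|f(t+\tau,x)-f(t,x)|<\varepsilon$ whenever $\tau$ lies in the left-hand set. The left-hand side is relatively dense by hypothesis, so the right-hand side is too, giving $\alpha f\in AP(\mathbb{T}\times D,\mathbb{E}^n)$.

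For $g(t,x):=f(t+b,x)$ with $b\in\Pi$, I would proceed in three steps. First, note that the map $t\mapsto t+b$ sends $\widetilde{\mathbb{T}}$ into $\widetilde{\mathbb{T}}$ by the invariance observation above, so $g$ is well-defined where needed; the bounded uniform continuity of $g$ on $\mathbb{T}\times D$ (interpreted on the natural subdomain $\mathbb{T}_b\times D$) follows from that of $f$ via the translation homeomorphism. Second, given $\tau\in E\{\varepsilon,f,S\}$ and any $(t,x)\in\widetilde{\mathbb{T}}\times S$, the point $t+b$ still lies in $\widetilde{\mathbb{T}}$, so applying the defining inequality of $E\{\varepsilon,f,S\}$ at the argument $t+b$ yields
\[
|g(t+\tau,x)-g(t,x)|=|f((t+b)+\tau,x)-f(t+b,x)|<\varepsilon.
\]
Third, this shows $E\{\varepsilon,f,S\}\subset E\{\varepsilon,g,S\}$, and relative density of the right-hand set follows from that of the left-hand set, proving $g\in AP(\mathbb{T}\times D,\mathbb{E}^n)$.

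The main (mild) obstacle is purely bookkeeping: one must check that translating by $b\in\Pi$ does not move the test variable out of $\widetilde{\mathbb{T}}$, which is where the defining inequality of almost periodicity lives under Definition~\ref{def32}. That is precisely what the group structure of $\Pi$ and the identity $\widetilde{\mathbb{T}}+\Pi\subset\widetilde{\mathbb{T}}$ deliver; once these are in place, everything reduces to the inclusion of $\varepsilon$-translation sets. No new ideas beyond the hypothesis are required, which is why the theorem is recorded as ``easy to prove.''
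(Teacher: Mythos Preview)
Your proposal is correct and is precisely the direct verification from Definition~\ref{def32} that the paper has in mind; the paper itself offers no details beyond the remark that the result follows easily from the definition. Your explicit observation that $\widetilde{\mathbb{T}}+\Pi\subset\widetilde{\mathbb{T}}$ (via the group structure of $\Pi$) is the only nontrivial point, and it is essentially the content of the proof of Lemma~\ref{lem31}, so nothing is missing.
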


Similar to the proofs of  Theorem 3.24, Theorem 3.27, Theorem 3.28 and  Theorem 3.29 in \cite{li1}, respectively,  one can prove the following four theorems.
\begin{theorem}\label{thm313}
If $f,g\in AP(\mathbb{T}\times D,\mathbb{E}^n)$, then
$f+g,fg\in AP(\mathbb{T}\times D,\mathbb{E}^n)$, if
\,\,$\inf\limits_{t\in\mathbb{T}}|g(t,x)|>0$, then
${f}/{g}\in AP(\mathbb{T}\times D,\mathbb{E}^n)$.
\end{theorem}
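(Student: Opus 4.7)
The plan is to reduce each of the three assertions to the defining triangle/product estimate for almost periodicity, using Theorem \ref{thm311} to secure simultaneous $\varepsilon$-translation numbers for $f$ and $g$ and Theorem \ref{thm35} (or directly Definition \ref{def32}) to conclude membership in $AP(\mathbb{T}\times D,\mathbb{E}^n)$. First I would check that each candidate function lies in $BUC(\mathbb{T}\times D,\mathbb{E}^n)$: for $f+g$ and $fg$ this is immediate since sums and products of bounded uniformly continuous maps are bounded uniformly continuous (using the sup bounds $|f|\le M_f$, $|g|\le M_g$ guaranteed by \eqref{g1} for the product estimate); for $f/g$ the hypothesis $\inf_{t\in\mathbb{T}}|g(t,x)|\ge m>0$ turns the identity $1/g(t+h)-1/g(t)=(g(t)-g(t+h))/(g(t)g(t+h))$ into a uniform continuity estimate with constant $1/m^{2}$, which also yields boundedness of $1/g$.

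For the sum, given $\varepsilon>0$ and a compact $S\subset D$, I would apply Theorem \ref{thm311} with tolerance $\varepsilon/2$ to obtain a relatively dense set of common translation numbers $\tau\in E(\varepsilon/2,f,S)\cap E(\varepsilon/2,g,S)\subset\Pi$, and then a single application of the triangle inequality
\begin{equation*}
|(f+g)(t+\tau,x)-(f+g)(t,x)|\le |f(t+\tau,x)-f(t,x)|+|g(t+\tau,x)-g(t,x)|<\varepsilon
\end{equation*}
on $\widetilde{\mathbb{T}}\times S$ finishes the argument. For the product, the key step is to exploit boundedness: writing $fg(t+\tau,x)-fg(t,x)=f(t+\tau,x)[g(t+\tau,x)-g(t,x)]+g(t,x)[f(t+\tau,x)-f(t,x)]$ and choosing $\tau$ in the common translation set associated with $\varepsilon/(M_{f}+M_{g})$ delivers $|fg(t+\tau,x)-fg(t,x)|<\varepsilon$ uniformly on $\widetilde{\mathbb{T}}\times S$.

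The quotient statement then follows from the product case once I establish that $1/g\in AP(\mathbb{T}\times D,\mathbb{E}^n)$, because $f/g=f\cdot(1/g)$. For $1/g$, given $\varepsilon>0$, I would pick translation numbers $\tau\in E(m^{2}\varepsilon,g,S)\subset\Pi$, which is relatively dense by almost periodicity of $g$, and use
\begin{equation*}
\bigl|1/g(t+\tau,x)-1/g(t,x)\bigr|=\frac{|g(t+\tau,x)-g(t,x)|}{|g(t+\tau,x)|\,|g(t,x)|}<\frac{m^{2}\varepsilon}{m^{2}}=\varepsilon
\end{equation*}
for $(t,x)\in\widetilde{\mathbb{T}}\times S$; note that the lower bound on $|g|$ on all of $\mathbb{T}$ is crucial here, both to keep $1/g$ in $BUC$ and to control the denominator at the translated point $t+\tau\in\mathbb{T}$.

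I do not foresee a deep obstacle; the arguments are essentially the standard Bochner/Fink ones transcribed to the time-scale setting provided by Definitions \ref{def31}--\ref{def32}. The only subtle point is that the translation numbers must be drawn from $\Pi$ and the inequalities verified on $\widetilde{\mathbb{T}}\times S$ rather than on the full product space, but this is exactly what Theorem \ref{thm311} guarantees, so no new ideas beyond the previously stated machinery are required.
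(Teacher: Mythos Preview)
Your argument is correct and is precisely the classical Bochner/Fink route: common $\varepsilon$-translation numbers via Theorem~\ref{thm311}, the triangle inequality for $f+g$, the add-and-subtract decomposition with sup bounds for $fg$, and the $|g|\ge m$ denominator estimate for $1/g$ followed by the product case. The paper gives no explicit proof here but simply refers to Theorem~3.24 in \cite{li1}, where exactly this standard argument is carried out; your write-up is the natural transcription to Definitions~\ref{def31}--\ref{def32}, the only adaptation being that translation numbers are drawn from $\Pi$ and the estimates are verified on $\widetilde{\mathbb{T}}\times S$, which you note correctly.
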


\begin{theorem}
If $f_{n}\in AP(\mathbb{T}\times D,\mathbb{E}^{n}) (n=1,2,\ldots)$
  and the sequence
$\{f_{n}\}$ uniformly converges to $f$ on
$\mathbb{T}\times S$, then $f\in AP(\mathbb{T}\times D,\mathbb{E}^n)$.
\end{theorem}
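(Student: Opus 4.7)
The plan is to apply the classical three-$\varepsilon$ argument of almost periodic function theory, adapted to the time-scale setting built on Definition \ref{def31} and Definition \ref{def32}. The only two things that must be checked are (a) that the limit $f$ lies in $BUC(\mathbb{T}\times D,\mathbb{E}^n)$, and (b) that for every $\varepsilon>0$ and every compact $S\subset D$ the set $E\{\varepsilon,f,S\}$ is relatively dense. All of the machinery for this is already standard; the time-scale flavour only enters through the requirement that translations be taken from $\Pi$ and applied to points of $\widetilde{\mathbb{T}}$.

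First I would check $f\in BUC(\mathbb{T}\times D,\mathbb{E}^n)$. Since each $f_n$ is bounded and uniformly continuous on $\mathbb{T}\times S$, and $f_n\to f$ uniformly on $\mathbb{T}\times S$, the limit $f$ is also bounded and uniformly continuous on $\mathbb{T}\times S$ by the usual uniform-limit arguments (uniform Cauchy property preserves boundedness, and a standard $\varepsilon/3$ split on the modulus of continuity preserves uniform continuity). This puts $f$ in the ambient space in which Definition \ref{def32} is formulated.

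Next, for the translation property, fix $\varepsilon>0$ and a compact $S\subset D$. Using the uniform convergence on $\mathbb{T}\times S$, choose $N=N(\varepsilon,S)$ with
\begin{equation*}
\sup_{(t,x)\in\mathbb{T}\times S}|f_N(t,x)-f(t,x)|<\varepsilon/3.
\end{equation*}
Since $f_N\in AP(\mathbb{T}\times D,\mathbb{E}^n)$, the set $E\{\varepsilon/3,f_N,S\}\subset\Pi$ is relatively dense. For any $\tau\in E\{\varepsilon/3,f_N,S\}$ and any $(t,x)\in\widetilde{\mathbb{T}}\times S$, the point $t+\tau$ lies in $\mathbb{T}$ by the comment immediately after Definition \ref{def31}, so all four values below are defined, and the triangle inequality yields
\begin{equation*}
|f(t+\tau,x)-f(t,x)|\le |f(t+\tau,x)-f_N(t+\tau,x)|+|f_N(t+\tau,x)-f_N(t,x)|+|f_N(t,x)-f(t,x)|<\varepsilon.
\end{equation*}
Hence $E\{\varepsilon/3,f_N,S\}\subset E\{\varepsilon,f,S\}$, and the latter inherits relative denseness from the former.

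There is no genuine obstacle here; the only point requiring attention, and the place where the new definitions play any role, is the bookkeeping in the last step: one must observe that $\tau\in\Pi$ (supplied automatically because $E\{\varepsilon/3,f_N,S\}\subset\Pi$) and that the estimate is required only for $t\in\widetilde{\mathbb{T}}$, so that $t+\tau\in\mathbb{T}$ makes sense and the triangle inequality can legitimately be written. Once that is in place, combining the $BUC$ verification with the relative denseness of $E\{\varepsilon,f,S\}$ gives $f\in AP(\mathbb{T}\times D,\mathbb{E}^n)$ by Definition \ref{def32}, completing the proof.
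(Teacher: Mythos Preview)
Your proposal is correct and is exactly the classical three-$\varepsilon$ argument; the paper itself does not spell out a proof but refers to the corresponding result in \cite{li1}, whose proof is precisely this standard argument adapted to the time-scale setting. Your care in noting that $\tau\in\Pi$ and $t\in\widetilde{\mathbb{T}}$ guarantee $t+\tau\in\mathbb{T}$ is the only nontrivial bookkeeping step, and you have handled it properly.
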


\begin{theorem}\label{thm316}
If $f\in AP(\mathbb{T}\times D,\mathbb{E}^n)$, denote
$
F(t,x)=\int_{0}^{t}f(s,x)\Delta s,
$
then $F\in AP(\mathbb{T}\times D,\mathbb{E}^n)$  if
and only if $F$ is bounded on $\mathbb{T}\times S$.
\end{theorem}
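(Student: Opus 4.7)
The theorem is a time-scale version of the classical Bohl--Bohr theorem on indefinite integrals of almost periodic functions. The necessity direction is immediate: by the inclusion $AP(\mathbb{T}\times D,\mathbb{E}^n)\subset BUC(\mathbb{T}\times D,\mathbb{E}^n)$ recalled in \eqref{g1}, any $F\in AP$ is in particular bounded on $\mathbb{T}\times S$.

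For the sufficiency direction, the plan is to verify the Bochner-type criterion of Theorem \ref{thm35} for $F$. Let $\alpha'=\{\alpha'_n\}\subset\Pi$ be an arbitrary sequence. First I would apply Theorem \ref{thm35} to $f$ itself to extract a subsequence $\beta\subset\alpha'$ with $T_\beta f=g$ uniformly on $\widetilde{\mathbb{T}}\times S$. The additivity of the $\Delta$-integral yields the identity
\[
F(t+\beta_n,x)-F(\beta_n,x)=\int_0^t f(s+\beta_n,x)\,\Delta s,
\]
whose right-hand side converges, uniformly in $x\in S$ and uniformly for $t$ ranging over any compact subset of $\widetilde{\mathbb{T}}$, to $G(t,x):=\int_0^t g(s,x)\,\Delta s$. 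The boundedness of $F$ on $\mathbb{T}\times S$ then allows a diagonal extraction of a further subsequence $\alpha\subset\beta$ so that $F(\alpha_n,x)\to h(x)$ along a countable dense subset of $S$; combined with the uniform continuity inherited from $f\in BUC$ this gives $F(t+\alpha_n,x)\to H(t,x):=G(t,x)+h(x)$ pointwise on $\widetilde{\mathbb{T}}\times S$ and uniformly on compact $t$-sets.

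The main obstacle, and the essential place where the boundedness hypothesis is used, lies in promoting this convergence to uniform convergence on all of $\widetilde{\mathbb{T}}\times S$. My plan is to invoke the classical Bohr identity
\[
F(u+\tau,x)-F(u,x)=F(\tau,x)+\int_0^u\bigl[f(s+\tau,x)-f(s,x)\bigr]\,\Delta s,
\]
valid for $u\in\mathbb{T}$ and $\tau\in\Pi$ (using closure of $\Pi$ under $\pm$ from Definition \ref{def31}(ii)), together with the relatively dense sets of common $\varepsilon$-translation numbers of $f$ guaranteed by Theorems \ref{thm310} and \ref{thm311}. Given $t\in\widetilde{\mathbb{T}}$, one selects an $\varepsilon$-translation number $\tau\in\Pi$ sending $t$ into a prescribed compact reference set, then controls the boundary contribution $F(\tau,x)$ via the assumed uniform bound on $F$ and the integral remainder via the $\varepsilon$-translation property of $f$. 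Without the boundedness hypothesis the indefinite integral could grow secularly (as occurs, for example, when $f$ has nonzero mean), which is exactly the obstruction that the ``if'' direction of the statement rules out; executing this covering argument rigorously over $\widetilde{\mathbb{T}}\times S$ is the technical heart of the proof.
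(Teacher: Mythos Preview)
The paper does not give a self-contained proof of this statement: immediately before listing it (together with three neighbouring results) the authors write ``Similar to the proofs of Theorem 3.24, Theorem 3.27, Theorem 3.28 and Theorem 3.29 in \cite{li1}, respectively, one can prove the following four theorems.'' So the paper's argument is simply a pointer to the Bohl--Bohr theorem (Theorem 3.28) in \cite{li1}, transported from the setting of Definition~\ref{def21} to that of Definition~\ref{def31} by working over $\widetilde{\mathbb{T}}$ and $\Pi$ in place of $\mathbb{T}$. Your outline --- Bochner compactness for $f$, the additive identity for the indefinite $\Delta$-integral, and a translation-number covering to upgrade compact convergence of the shifts of $F$ to uniform convergence --- is exactly the classical Bohl--Bohr scheme and is what \cite{li1} carries out, so your proposal is in line with what the paper intends.

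One point in your sketch needs more care than you indicate. After the diagonal extraction giving $F(\alpha_n,x)\to h(x)$ on a countable dense subset of $S$, you invoke ``uniform continuity inherited from $f\in BUC$'' to extend to all of $S$. But uniform continuity of $f$ alone does not give equicontinuity of the family $\{F(\alpha_n,\cdot)\}_n$: the estimate $|F(\alpha_n,x)-F(\alpha_n,y)|\le \int_0^{\alpha_n}|f(s,x)-f(s,y)|\,\Delta s$ involves integration over an interval of unbounded length. The standard repair (and the one implicit in \cite{li1}) is to use the boundedness hypothesis on $F$ together with the almost periodicity of $f$ to first prove that $F$ itself lies in $BUC(\mathbb{T}\times S,\mathbb{E}^n)$, after which Theorem~\ref{thm35} applies cleanly; alternatively one runs the Bohr argument for each fixed $x$ and checks uniformity in $x\in S$ via Theorem~\ref{thm311}. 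Your final paragraph correctly flags the uniform-in-$t$ upgrade as the crux; just be aware that the uniform-in-$x$ part also requires the boundedness of $F$ and is not a free consequence of $f\in BUC$. The shift identities you use also rely on a change-of-variable for $\Delta$-integrals under translation by $\tau\in\Pi$, which is valid on $\widetilde{\mathbb{T}}$ by Lemma~\ref{lem31} but should be stated explicitly.
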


\begin{theorem}\label{thm317}
If $f\in AP(\mathbb{T}\times D,\mathbb{E}^n)$,
$F(\cdot)$ is uniformly continuous on the value field of $f$,
then $F\circ f$ is almost periodic in $t$ uniformly for $x\in D$.
\end{theorem}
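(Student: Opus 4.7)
The plan is to verify Definition \ref{def32} directly for $g := F\circ f$, which requires two things: (i) $g\in BUC(\mathbb{T}\times D,\mathbb{E}^n)$, and (ii) the $\varepsilon$-translation set $E\{\varepsilon,g,S\}$ is relatively dense in $\Pi$ for every $\varepsilon>0$ and every compact $S\subset D$. The argument is the standard ``uniformly continuous outer map preserves almost periodicity'' chain, now transported to the time-scale setting of Definition \ref{def31}.

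I would start with (ii), since it is the core. Fix $\varepsilon>0$ and a compact set $S\subset D$. Uniform continuity of $F$ on the value field $V:=f(\mathbb{T}\times D)$ produces $\delta=\delta(\varepsilon)>0$ such that $|F(y_1)-F(y_2)|<\varepsilon$ whenever $y_1,y_2\in V$ with $|y_1-y_2|<\delta$. Since $f\in AP(\mathbb{T}\times D,\mathbb{E}^n)$, there exists $l=l(\delta,S)>0$ such that every interval of length $l$ contains some $\tau\in\Pi$ with
\[
|f(t+\tau,x)-f(t,x)|<\delta,\quad \forall\,(t,x)\in \widetilde{\mathbb{T}}\times S.
\]
Both values $f(t+\tau,x)$ and $f(t,x)$ lie in $V$ (note that $t+\tau\in\mathbb{T}$ by the remark following Definition \ref{def31}), so the modulus of $F$ yields $|g(t+\tau,x)-g(t,x)|<\varepsilon$ on $\widetilde{\mathbb{T}}\times S$. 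Thus $\tau\in E\{\varepsilon,g,S\}$, and this set is relatively dense in $\Pi$.

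For (i), boundedness of $g$ on $\mathbb{T}\times D$ follows because $f$ is bounded (being in $BUC$), hence $V$ is bounded, and $F$ uniformly continuous on the bounded set $V$ is bounded there. Uniform continuity of $g$ on $\mathbb{T}\times D$ is obtained by the same composition: for $\varepsilon>0$, take $\delta$ from the modulus of $F$ on $V$, and then the modulus of continuity of $f$ produces a single $\eta>0$ such that whenever $(t_1,x_1),(t_2,x_2)\in \mathbb{T}\times D$ are within $\eta$ we have $|f(t_1,x_1)-f(t_2,x_2)|<\delta$, whence $|g(t_1,x_1)-g(t_2,x_2)|<\varepsilon$.

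I do not expect a substantive obstacle: the only point requiring care is the bookkeeping around the asymmetry between $\mathbb{T}$ and $\widetilde{\mathbb{T}}$ in Definition \ref{def32}, namely that the chosen translation number must lie in $\Pi$ so that $t+\tau\in\mathbb{T}$ whenever $t\in\widetilde{\mathbb{T}}$. This is automatic because $E\{\delta,f,S\}\subset\Pi$ by definition, and Definition \ref{def31} ensures $t+\tau\in\mathbb{T}$ for $t\in\widetilde{\mathbb{T}}$ and $\tau\in\Pi$. Everything else is a two-line application of the $\varepsilon$-$\delta$ definition of uniform continuity composed with the $\varepsilon$-translation property of $f$.
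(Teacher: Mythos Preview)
Your argument is correct and is exactly the standard $\varepsilon$--$\delta$ composition that the paper has in mind: the paper does not write out a proof here but refers to Theorem~3.29 of \cite{li1}, whose proof proceeds precisely by pulling a $\delta$-translation number of $f$ through the modulus of uniform continuity of $F$. Your attention to the $\mathbb{T}/\widetilde{\mathbb{T}}$ bookkeeping is appropriate and disposes of the only genuinely new point introduced by Definition~\ref{def31}.
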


By Definition \ref{def32}, one can easily prove
\begin{theorem}\label{thma1}
Let $f:\mathbb{R}\rightarrow \mathbb{R}$ satisfies Lipschitz condition and $\varphi(t)\in AP(\mathbb{T})$, then $f(\varphi(t))\in AP(\mathbb{T})$.
\end{theorem}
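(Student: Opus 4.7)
The plan is to verify Definition \ref{def32} directly for $f\circ\varphi$, using the Lipschitz hypothesis as the mechanism for transferring $\varepsilon$-translation numbers from $\varphi$ to $f\circ\varphi$. Observe first that by hypothesis there is a constant $L>0$ with $|f(u)-f(v)|\leq L|u-v|$ for all $u,v\in\mathbb{R}$, and that $\varphi\in AP(\mathbb{T})$ already implies $\varphi\in BUC(\mathbb{T},\mathbb{R})$ by the inclusion mentioned around \eqref{g1}, so $\varphi$ is bounded and uniformly continuous on $\mathbb{T}$.

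Before applying Definition \ref{def32} I would check $f\circ\varphi\in BUC(\mathbb{T},\mathbb{R})$. Boundedness is immediate from
\[
|f(\varphi(t))|\leq |f(0)|+L|\varphi(t)|\leq |f(0)|+L\sup_{s\in\mathbb{T}}|\varphi(s)|,
\]
and uniform continuity follows from $|f(\varphi(t))-f(\varphi(s))|\leq L|\varphi(t)-\varphi(s)|$ combined with the uniform continuity of $\varphi$.

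Next, given any $\e>0$, I would set $\e'=\e/L$. Since $\varphi\in AP(\mathbb{T})$, the set $E\{\e',\varphi\}\subset\Pi$ is relatively dense, so there exists an inclusion length $l(\e')>0$ such that every interval of length $l(\e')$ contains some $\tau\in\Pi$ with $|\varphi(t+\tau)-\varphi(t)|<\e'$ for every $t\in\widetilde{\mathbb{T}}$. For such a $\tau$ and for every $t\in\widetilde{\mathbb{T}}$ (so that $t+\tau\in\mathbb{T}$ by the remark right after Definition \ref{def31}), the Lipschitz estimate gives
\[
|f(\varphi(t+\tau))-f(\varphi(t))|\leq L|\varphi(t+\tau)-\varphi(t)|<L\cdot\frac{\e}{L}=\e,
\]
so $\tau\in E\{\e,f\circ\varphi\}$. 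Hence $E\{\e',\varphi\}\subset E\{\e,f\circ\varphi\}$, and the latter is relatively dense in $\Pi$, completing the verification of Definition \ref{def32} and giving $f(\varphi(\cdot))\in AP(\mathbb{T})$.

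The result is in essence a special case of Theorem \ref{thm317}, since a Lipschitz function on $\mathbb{R}$ is automatically uniformly continuous on the (bounded) range of $\varphi$; I do not anticipate a genuine obstacle. The only two points that require care are (a) verifying $f\circ\varphi\in BUC(\mathbb{T},\mathbb{R})$ so that Definition \ref{def32} is applicable, and (b) making sure that when we evaluate $\varphi(t+\tau)$ we are on $\widetilde{\mathbb{T}}$, which is exactly the consistency property packaged into Definition \ref{def31}(iii).
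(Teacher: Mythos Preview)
Your proposal is correct and matches the paper's approach exactly: the paper merely states ``By Definition \ref{def32}, one can easily prove'' without further detail, and what you have written is precisely that direct verification, carried out in full. The only cosmetic point is that your choice $\e'=\e/L$ tacitly assumes $L>0$; the degenerate case $L=0$ (constant $f$) is trivial.
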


\begin{definition}\cite{liwang} Let $A(t)$ be an $n\times n$ rd-continuous matrix on $\mathbb{T}$, the linear system
\begin{eqnarray}\label{e211}
 x^{\Delta}(t)=A(t)x(t), \quad t\in\mathbb{T}
\end{eqnarray}
is said to admit an exponential dichotomy on $\mathbb{T}$ if there exist positive constant $k$, $\alpha$, projection $P$, and the fundamental solution matrix $X(t)$ of $\eqref{e211}$, satisfying
\begin{eqnarray*}
|X(t)PX^{-1}(\sigma(s))|\leq k e_{\ominus_\alpha}(t, \sigma(s)),\,\,s,t\in
\mathbb{T},\,\, t\geq \sigma(s),
\end{eqnarray*}
\begin{eqnarray*}
|X(t)(I-P)X^{-1}(\sigma(s))|\leq
k e_{\ominus_\alpha}(\sigma(s),t),\,\,s,t\in \mathbb{T},\,\, t\leq\sigma(s),
\end{eqnarray*}
where $|\cdot|$ is a matrix norm on $\mathbb{T}$, that is, if
$A=(a_{ij})_{n\times m}$, then we can take
$|A|=\big(\sum\limits_{i=1}^n\sum\limits_{j=1}^m|a_{ij}|^2\big)^{\frac{1}{2}}$.
\end{definition}

Similar to the proof of Lemma 2.15 in \cite{liwang}, one can easily show that
\begin{lemma}\label{lem37}
Let $a_{ii}(t)$ be an uniformly bounded $rd$-continuous   function on $\mathbb{T}$, where $a_{ii}(t)>0$, $-a_{ii}(t)\in\mathcal{R}^{+}$ for every $t\in\mathbb{T}$ and
\begin{eqnarray*}
\min_{1\leq i\leq n}\{\inf_{t\in\mathbb{T}}a_{ii}(t)\}>0,
\end{eqnarray*}
then the linear system
\begin{eqnarray*}
x^{\Delta}(t)=diag(-a_{11}(t), -a_{22}(t), \ldots, -a_{nn}(t))x(t)
\end{eqnarray*}
admits an exponential dichotomy on $\mathbb{T}$.
\end{lemma}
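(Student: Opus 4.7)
The plan is to build the exponential dichotomy directly from the explicit diagonal fundamental matrix, with the projection taken to be the identity. Since the system decouples into $n$ scalar equations $x_i^\Delta(t)=-a_{ii}(t)x_i(t)$, and each coefficient $-a_{ii}$ is rd-continuous and regressive, a fundamental solution matrix is
\[
X(t)=\operatorname{diag}\bigl(e_{-a_{11}}(t,t_0),\,e_{-a_{22}}(t,t_0),\,\ldots,\,e_{-a_{nn}}(t,t_0)\bigr),
\]
so $X(t)X^{-1}(\sigma(s))=\operatorname{diag}\bigl(e_{-a_{ii}}(t,\sigma(s))\bigr)_{i=1}^{n}$. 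I would then choose the projection $P=I$, which makes the second inequality in the definition of exponential dichotomy trivial because $I-P=0$, and reduces everything to estimating the diagonal entries for $t\geq\sigma(s)$.

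Next I would set $\alpha=\min_{1\le i\le n}\inf_{t\in\mathbb{T}}a_{ii}(t)$, which is strictly positive by hypothesis. The assumption $-a_{ii}\in\mathcal R^{+}$ together with the uniform bound on $a_{ii}$ and the boundedness of $\mu$ on the relevant time scale gives $1-\mu(t)\alpha\geq 1-\mu(t)a_{ii}(t)>0$, so $-\alpha\in\mathcal R^{+}$ as well. By part (ii) of Lemma 2.1, from $-a_{ii}(t)\leq -\alpha$ we obtain $e_{-a_{ii}}(t,\sigma(s))\leq e_{-\alpha}(t,\sigma(s))$ for all $t\geq\sigma(s)$. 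Combining these entrywise bounds in the Frobenius norm yields
\[
\bigl|X(t)PX^{-1}(\sigma(s))\bigr|=\Bigl(\sum_{i=1}^{n}e_{-a_{ii}}(t,\sigma(s))^{2}\Bigr)^{\!1/2}\leq\sqrt{n}\,e_{-\alpha}(t,\sigma(s)),\qquad t\geq\sigma(s).
\]

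The remaining step is to pass from $e_{-\alpha}$ to $e_{\ominus\alpha}$, which is the form demanded by the definition of exponential dichotomy. I would use the elementary identity $\log(1-x)+\log(1+x)=\log(1-x^{2})\leq 0$ for $0<x<1$, applied with $x=\mu(t)\alpha$; this yields $\xi_{\mu(t)}(-\alpha)\leq\xi_{\mu(t)}(\ominus\alpha)$ pointwise, hence $e_{-\alpha}(t,\sigma(s))\leq e_{\ominus\alpha}(t,\sigma(s))$ for $t\geq\sigma(s)$. Setting $k=\sqrt{n}$ then gives the desired estimate $|X(t)PX^{-1}(\sigma(s))|\leq k\,e_{\ominus\alpha}(t,\sigma(s))$, completing the verification.

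The main obstacle, as I see it, is the bookkeeping needed to guarantee that the constant coefficient $-\alpha$ really lies in $\mathcal R^{+}$ uniformly, so that $e_{\ominus\alpha}$ is well-defined and well-behaved; once this is in place, the diagonal structure and Lemma 2.1 deliver the estimate immediately, mirroring the argument of Lemma 2.15 in \cite{liwang}.
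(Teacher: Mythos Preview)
Your proposal is correct and follows exactly the route the paper indicates: the paper gives no proof of its own but simply states that the result follows ``similar to the proof of Lemma 2.15 in \cite{liwang},'' and you have supplied precisely those details, down to the explicit reference at the end. The choice of the diagonal fundamental matrix, the projection $P=I$, the comparison via Lemma~2.1(ii), and the passage from $e_{-\alpha}$ to $e_{\ominus\alpha}$ are all the standard ingredients of that argument, so there is nothing to add.
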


According to Lemma \ref{lem31},
  $\widetilde{\mathbb{T}}$ is  an almost periodic time scales under Definition \ref{def21}, we denote the forward and the backward jump operators of $\widetilde{\mathbb{T}}$ by $\widetilde{\sigma}$ and $\widetilde{\rho}$, respectively.
\begin{lemma}\label{lem34}
If $t$ is a right-dense point on $\widetilde{\mathbb{T}}$, then $t$ is also a right-dense point on $\mathbb{T}$.
\end{lemma}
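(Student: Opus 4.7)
The plan is a short unpacking of definitions combined with the set inclusion $\widetilde{\mathbb{T}}\subseteq \mathbb{T}$. By the very definition $\widetilde{\mathbb{T}}=\bigcap_{\tau\in\Pi}\mathbb{T}_\tau$ with $\mathbb{T}_\tau=\mathbb{T}\cap(\mathbb{T}-\tau)\subseteq\mathbb{T}$, so $\widetilde{\mathbb{T}}\subseteq\mathbb{T}$; in particular the given $t$ belongs to $\mathbb{T}$, so it is meaningful to ask whether $t$ is right-dense there.

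Since $\Pi$ contains a nonzero element $\tau_{0}$ and is closed under addition by Definition~\ref{def31}(ii), iterated translation by $\tau_{0}$ produces points of $\widetilde{\mathbb{T}}$ arbitrarily far to the right of $t$, so $\sup\widetilde{\mathbb{T}}=+\infty$ (this is the content needed from Remark~\ref{ar1} and Lemma~\ref{lem31}). Hence the assumption $\widetilde{\sigma}(t)=t$ is not vacuous: the set $\{s\in\widetilde{\mathbb{T}}:s>t\}$ is nonempty and has infimum $t$, so for every $\varepsilon>0$ there exists $s_{\varepsilon}\in\widetilde{\mathbb{T}}$ with $t<s_{\varepsilon}<t+\varepsilon$.

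Using $\widetilde{\mathbb{T}}\subseteq\mathbb{T}$, each $s_{\varepsilon}$ lies in $\mathbb{T}$, so $\sigma(t)=\inf\{s\in\mathbb{T}:s>t\}\le s_{\varepsilon}<t+\varepsilon$. Letting $\varepsilon\downarrow 0$ yields $\sigma(t)\le t$, and together with the trivial bound $\sigma(t)\ge t$ this forces $\sigma(t)=t$, which is exactly the right-density of $t$ on $\mathbb{T}$.

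The argument is essentially a one-liner once the inclusion $\widetilde{\mathbb{T}}\subseteq\mathbb{T}$ is recorded; the only point that genuinely requires attention is confirming that right-density on $\widetilde{\mathbb{T}}$ actually supplies approximating points from the right, rather than being vacuously true at a maximum of $\widetilde{\mathbb{T}}$. This is where the translational structure of $\Pi$ enters, via the unboundedness of $\widetilde{\mathbb{T}}$ above.
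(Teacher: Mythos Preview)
Your proof is correct and follows essentially the same approach as the paper: both rest on the inclusion $\widetilde{\mathbb{T}}\subseteq\mathbb{T}$, from which the paper simply writes $t=\widetilde{\sigma}(t)=\inf\{s\in\widetilde{\mathbb{T}}:s>t\}\ge\inf\{s\in\mathbb{T}:s>t\}=\sigma(t)$ and combines with $\sigma(t)\ge t$. Your extra care about non-vacuousness is harmless but unnecessary here, since the paper's definition of right-dense already requires $t<\sup\widetilde{\mathbb{T}}$.
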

\begin{proof}
Let $t$ be a right-dense point on $\widetilde{\mathbb{T}}$, then
\[
t=\widetilde{\sigma}(t)=\inf\{s\in\widetilde{\mathbb{T}}:s>t\}\geq\inf\{s\in\mathbb{T}:s>t\}=\sigma(t).
\]
Since $\sigma(t)\geq t$,   $t=\sigma(t)$. The proof is complete.
\end{proof}
Similar to the proof of Lemma \ref{lem34},  one can prove the following lemma.
\begin{lemma}\label{lem35}
If $t$ is a left-dense point $\widetilde{\mathbb{T}}$, then $t$ is also a left-dense point on $\mathbb{T}$.
\end{lemma}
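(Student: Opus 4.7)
The plan is to mirror the argument used for Lemma 3.4, replacing the forward jump operator with the backward jump operator and the infimum with a supremum. The key structural facts that make this work are the set inclusion $\widetilde{\mathbb{T}} \subset \mathbb{T}$ (which follows immediately from $\widetilde{\mathbb{T}} = \bigcap_{\tau\in\Pi}\mathbb{T}_\tau$ and each $\mathbb{T}_\tau\subset\mathbb{T}$) together with the universal inequality $\rho(t)\le t$ holding for any time scale.

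First I would suppose $t$ is a left-dense point of $\widetilde{\mathbb{T}}$ and unpack the definition as
\[
t=\widetilde{\rho}(t)=\sup\{s\in\widetilde{\mathbb{T}}:s<t\}.
\]
Then I would use the inclusion $\widetilde{\mathbb{T}}\subset\mathbb{T}$ to get
\[
\{s\in\widetilde{\mathbb{T}}:s<t\}\subset\{s\in\mathbb{T}:s<t\},
\]
whence taking suprema (note the inequality flips compared with Lemma 3.4 because we are now dealing with $\sup$ of a smaller set being at most the $\sup$ of a larger one) yields
\[
t=\sup\{s\in\widetilde{\mathbb{T}}:s<t\}\le\sup\{s\in\mathbb{T}:s<t\}=\rho(t).
\]

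Finally, combining this with the elementary fact $\rho(t)\le t$ valid on any time scale forces $\rho(t)=t$, so $t$ is a left-dense point of $\mathbb{T}$. I do not anticipate any real obstacle: the only subtlety is the switch from $\inf$/$\ge$ in Lemma 3.4 to $\sup$/$\le$ here, and the proof is otherwise a verbatim dualization. One should just be a little careful to make sure the set $\{s\in\widetilde{\mathbb{T}}:s<t\}$ is nonempty (which it is, since $t$ being left-dense in $\widetilde{\mathbb{T}}$ implicitly requires $t>\inf\widetilde{\mathbb{T}}$), so that the supremum is well-defined.
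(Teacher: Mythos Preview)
Your proposal is correct and is exactly the dualization of the proof of Lemma~3.4 that the paper itself indicates (the paper simply writes ``Similar to the proof of Lemma \ref{lem34}, one can prove the following lemma''). Your extra remark about non-emptiness of $\{s\in\widetilde{\mathbb{T}}:s<t\}$ is a nice bit of care, but otherwise there is nothing to add.
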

For each $f\in C(\mathbb{T},\mathbb{R})$, we define $\widetilde{f}:\widetilde{\mathbb{T}}\rightarrow\mathbb{R}$ by
$\widetilde{f}(t)=f(t)$ for $t\in\widetilde{\mathbb{T}}$. From Lemmas \ref{lem34} and   \ref{lem35}, we can get that
$\widetilde{f}\in C(\widetilde{\mathbb{T}},\mathbb{R})$. Therefore, $F$ defined by
\[
F(t):=\int^{t}_{t_0}\widetilde{f}(\tau)\widetilde{\Delta}\tau,\,\,
t_0,t\in\widetilde{\mathbb{T}}
\]
is an antiderivative of $f$ on $\widetilde{\mathbb{T}}$, where $\widetilde{\Delta}$ denotes the $\Delta$-derivative on $\widetilde{\mathbb{T}}$.

Set $\widetilde{\Pi}=\{\tau\in \Pi: t\pm\tau\in \widetilde{\mathbb{T}}\}$. We give our second definition of almost periodic functions on time scales as follows.

\begin{definition}\label{def32b}
Let $\mathbb{T}$ be an almost periodic time scale under sense of Definition \ref{def31}. A function  $f\in
BUC(\mathbb{T}\times D,\mathbb{E}^n)$  is  called an almost
periodic function in $t\in \mathbb{T}$ uniformly for $x\in D$ if the
$\varepsilon$-translation set of $f$
$$E\{\varepsilon,f,S\}=\{\tau\in\widetilde{\Pi}:|f(t+\tau,x)-f(t,x)|<\varepsilon,\quad\forall  (t,x)\in   \widetilde{\mathbb{T}}\times S\}$$ is  relatively
dense  for all $\varepsilon>0$ and   for each
compact subset $S$ of $D$; that is, for any given $\varepsilon>0$
and each compact subset $S$ of $D$, there exists a constant
$l(\varepsilon,S)>0$ such that each interval of length
$l(\varepsilon,S)$ contains   a $\tau(\varepsilon,S)\in
E\{\varepsilon,f,S\}$ such that
\begin{equation*}
|f(t+\tau,x)-f(t,x)|<\varepsilon, \quad\forall (t,x)\in
\widetilde{\mathbb{T}}\times S.
\end{equation*}
This   $\tau$ is called the $\varepsilon$-translation number of $f$.
\end{definition}
\begin{remark} It is clear that if a function is an almost periodic function under Definition \ref{def32}, then it is also an almost periodic function under Definition \ref{def32b}.
\end{remark}
\begin{remark} Since $\widetilde{\mathbb{T}}$   is  an almost periodic time scales under Definition \ref{def21}, under Definition \ref{def32}, all the results obtained in \cite{li1} remain valid when we restrict our discussion to $\widetilde{\mathbb{T}}$.
\end{remark}

In the following, we restrict our discuss under Definition \ref{def32b}.

Consider the following almost periodic system:
\begin{eqnarray}\label{e212}
 x^{\Delta}(t)=A(t)x(t)+f(t), \quad t\in\mathbb{T},
\end{eqnarray}
where $A(t)$ is a $n\times n$ almost periodic matrix function, $f(t)$ is a $n$-dimensional almost periodic vector function.

Similar to Lemma 2.13 in \cite{liwang}, one can easily get
\begin{lemma}\label{bs}
If  linear system $\eqref{e211}$ admits an exponential dichotomy, then system $\eqref{e212}$ has a
bounded solution $x(t)$ as follows:
\begin{eqnarray*}
x(t)=\int_{-\infty}^{t}X(t)PX^{-1}(\sigma(s))f(s)\Delta s-\int^{+\infty}_{t}X(t)(I-P)X^{-1}(\sigma(s))f(s)\Delta s,\,\, t\in \mathbb{T},
\end{eqnarray*}
where $X(t)$ is the fundamental solution matrix of $\eqref{e211}$.
\end{lemma}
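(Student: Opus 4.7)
The plan is to show that the candidate
$$
x(t)=x_1(t)-x_2(t),\qquad x_1(t)=\int_{-\infty}^{t}X(t)PX^{-1}(\sigma(s))f(s)\Delta s,\quad x_2(t)=\int_{t}^{+\infty}X(t)(I-P)X^{-1}(\sigma(s))f(s)\Delta s
$$
is well-defined, bounded on $\mathbb{T}$, and satisfies $x^{\Delta}(t)=A(t)x(t)+f(t)$.

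First I would verify absolute convergence of the two improper integrals and uniform boundedness in $t$. Since $f$ is almost periodic, it is bounded with $\|f\|_\infty<\infty$. Using the exponential dichotomy estimates $|X(t)PX^{-1}(\sigma(s))|\le ke_{\ominus\alpha}(t,\sigma(s))$ for $t\ge\sigma(s)$ and $|X(t)(I-P)X^{-1}(\sigma(s))|\le ke_{\ominus\alpha}(\sigma(s),t)$ for $t\le\sigma(s)$, both integrands decay exponentially away from $s=t$, so one obtains a bound of the form $\|x(t)\|\le 2k\alpha^{-1}\|f\|_\infty$ (up to standard time scale calculus constants), independent of $t$.

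Next I would compute $x^{\Delta}(t)$ by pulling the $X(t)$ factor outside each integral and applying the time scale product rule $(FG)^{\Delta}=F^{\Delta}G+F^{\sigma}G^{\Delta}$. Write $x_1(t)=X(t)G(t)$ with $G(t)=\int_{-\infty}^{t}PX^{-1}(\sigma(s))f(s)\Delta s$ and $x_2(t)=X(t)H(t)$ with $H(t)=\int_{t}^{+\infty}(I-P)X^{-1}(\sigma(s))f(s)\Delta s$. Then $X^{\Delta}(t)=A(t)X(t)$, $G^{\Delta}(t)=PX^{-1}(\sigma(t))f(t)$, $H^{\Delta}(t)=-(I-P)X^{-1}(\sigma(t))f(t)$, so
$$
x_1^{\Delta}(t)=A(t)x_1(t)+X(\sigma(t))PX^{-1}(\sigma(t))f(t),\qquad x_2^{\Delta}(t)=A(t)x_2(t)-X(\sigma(t))(I-P)X^{-1}(\sigma(t))f(t).
$$
Subtracting, the $A$-terms reassemble as $A(t)x(t)$ and the remainder telescopes via $P+(I-P)=I$ and $X(\sigma(t))X^{-1}(\sigma(t))=I$ to give exactly $f(t)$, confirming that $x(t)$ solves \eqref{e212}.

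The main technical obstacle is handling the delta differentiation of the two improper integrals carefully: one must justify that $G^{\Delta}(t)$ and $H^{\Delta}(t)$ are the integrand evaluated at $s=t$ (with the right sign), which requires the uniform convergence provided by the dichotomy bounds, and one must keep track of the $\sigma$-shift coming from the time scale product rule — it is precisely this shift that makes the argument $\sigma(s)$ in $X^{-1}(\sigma(s))$ natural and makes the residual term collapse to $f(t)$. Everything else is routine bookkeeping once the pattern from the proof of Lemma 2.13 in \cite{liwang} is adapted.
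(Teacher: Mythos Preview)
Your proposal is correct and follows exactly the standard route the paper defers to: the paper does not give its own proof but simply states ``Similar to Lemma 2.13 in \cite{liwang}, one can easily get'' this result, and your sketch (dichotomy bounds for convergence and boundedness, then the time scale product rule $(FG)^{\Delta}=F^{\Delta}G+F^{\sigma}G^{\Delta}$ applied to $X(t)$ times the integral factors, with the $\sigma$-shift making the residual collapse via $P+(I-P)=I$) is precisely that argument. There is nothing to add.
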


By Theorem 4.19 in \cite{li1}, we have

\begin{lemma}\label{un}
Let $A(t)$ be an almost periodic matrix function and $f(t)$ be an almost periodic vector function. If \eqref{e211}
admits an exponential dichotomy, then \eqref{e212} has a unique almost periodic solution:
\[
x(t)=\int^t_{-\infty}\widetilde{X}(t)P\widetilde{X}^{-1}(\widetilde{\sigma}(s))\widetilde{f}(s)\widetilde{\Delta} s-\int_t^{+\infty}\widetilde{X}(t)(I-P)\widetilde{X}^{-1}(\widetilde{\sigma}(s))\widetilde{f}(s)\widetilde{\Delta} s,\,\, t\in \widetilde{\mathbb{T}},
\]
where $\widetilde{X}(t)$ is the restriction of the fundamental solution matrix of \eqref{e211} on $\widetilde{\mathbb{T}}$.
\end{lemma}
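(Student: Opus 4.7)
The plan is to pass to the sub-time scale $\widetilde{\mathbb{T}}$ --- which, by Lemma \ref{lem31}, is an almost periodic time scale in the classical sense of Definition \ref{def21} --- and then invoke Theorem 4.19 of \cite{li1} there. The output of that theorem is precisely the integral formula in the statement of Lemma \ref{un}, with all operators interpreted on $\widetilde{\mathbb{T}}$, so once the hypotheses of that theorem are checked on $\widetilde{\mathbb{T}}$ the conclusion is immediate.

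To apply Theorem 4.19 I would first verify that the restrictions $\widetilde{A}$ and $\widetilde{f}$ are almost periodic on $\widetilde{\mathbb{T}}$ in the sense of Definition \ref{defli2}. This is essentially a tautology from Definition \ref{def32b}: the $\varepsilon$-translation numbers are drawn from $\widetilde{\Pi}$, and the defining inequality is already imposed for $(t,x)\in\widetilde{\mathbb{T}}\times S$, which is exactly Definition \ref{defli2} read on the time scale $\widetilde{\mathbb{T}}$. Moreover, Lemmas \ref{lem34}--\ref{lem35} guarantee that rd-continuity of $A$ and $f$ on $\mathbb{T}$ descends to rd-continuity of $\widetilde{A}$ and $\widetilde{f}$ on $\widetilde{\mathbb{T}}$. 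Second, I would verify that the restricted linear system on $\widetilde{\mathbb{T}}$ still admits an exponential dichotomy, with the restricted fundamental matrix $\widetilde{X}(t)$ playing the role of $X(t)$ and the $\widetilde{\mathbb{T}}$-exponential $e_{\ominus\alpha}(\cdot,\cdot)$ replacing its $\mathbb{T}$-counterpart. With both ingredients in place, Theorem 4.19 of \cite{li1} yields a unique almost periodic solution on $\widetilde{\mathbb{T}}$ expressed by the claimed integral formula, and Lemma \ref{bs} applied on $\mathbb{T}$ produces a bounded solution whose restriction to $\widetilde{\mathbb{T}}$ must coincide with this almost periodic solution by uniqueness in the bounded class.

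The main obstacle is the second step: $\sigma$ and $\widetilde{\sigma}$ do not coincide in general, because a point that is right-scattered in $\widetilde{\mathbb{T}}$ may be right-dense in $\mathbb{T}$. The dichotomy estimates $|X(t)PX^{-1}(\sigma(s))|\le k e_{\ominus\alpha}(t,\sigma(s))$ therefore do not transfer verbatim to $|\widetilde{X}(t)P\widetilde{X}^{-1}(\widetilde{\sigma}(s))|\le k e_{\ominus\alpha}(t,\widetilde{\sigma}(s))$. Rederiving these bounds on $\widetilde{\mathbb{T}}$ requires the uniform boundedness of $A$ together with the monotone comparison for $e_p$ quoted in the first lemma of Section~2, together with the observation that $\widetilde{X}$ indeed solves the restricted system because $\widetilde{\mathbb{T}}$-left/right-dense points are $\mathbb{T}$-left/right-dense (Lemmas \ref{lem34}--\ref{lem35}). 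Once this compatibility is pinned down, the remainder of the argument is a direct translation of hypotheses and an appeal to Theorem 4.19 of \cite{li1}.
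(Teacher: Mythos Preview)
Your approach is essentially identical to the paper's: the paper does not give a proof at all but simply prefaces the lemma with ``By Theorem 4.19 in \cite{li1}, we have,'' i.e.\ it invokes exactly the result you plan to invoke on $\widetilde{\mathbb{T}}$. Your proposal is in fact more careful than the paper, since you spell out the hypothesis checks and flag the $\sigma$ versus $\widetilde{\sigma}$ issue that the paper's bare citation leaves implicit.
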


From Definition \ref{def32} and Lemmas \ref{bs} and \ref{un}, one can easily get the following lemma.
\begin{lemma}\label{rem34}
If  linear system $\eqref{e211}$ admits an exponential dichotomy, then system $\eqref{e212}$ has an almost periodic solution
 $x(t)$ can be expressed as:
\begin{eqnarray*}
x(t)=\int_{-\infty}^{t}X(t)PX^{-1}(\sigma(s))f(s)\Delta s-\int^{+\infty}_{t}X(t)(I-P)X^{-1}(\sigma(s))f(s)\Delta s, \,\, t\in \mathbb{T},
\end{eqnarray*}
where $X(t)$ is the fundamental solution matrix of $\eqref{e211}$.
\end{lemma}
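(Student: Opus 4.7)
The plan is to identify the bounded solution of \eqref{e212} on $\mathbb{T}$ furnished by Lemma \ref{bs} with the unique almost periodic solution on $\widetilde{\mathbb{T}}$ furnished by Lemma \ref{un}, and then invoke Definition \ref{def32b}, which only requires the $\varepsilon$-translation property to hold on $\widetilde{\mathbb{T}}\times S$ with $\tau\in\widetilde{\Pi}$. First I would apply Lemma \ref{bs} directly: the exponential dichotomy of \eqref{e211} makes
\[
x(t)=\int_{-\infty}^{t}X(t)PX^{-1}(\sigma(s))f(s)\Delta s-\int_{t}^{+\infty}X(t)(I-P)X^{-1}(\sigma(s))f(s)\Delta s
\]
a bounded solution of \eqref{e212} on $\mathbb{T}$. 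Since $A$ and $f$ are almost periodic, by the inclusion \eqref{g1} they are bounded and uniformly continuous; combined with boundedness of $x$, the identity $x^{\Delta}=A(t)x(t)+f(t)$ shows $x^{\Delta}$ is bounded, and a standard mean-value-type argument on $\mathbb{T}$ then gives $x\in BUC(\mathbb{T},\mathbb{E}^{n})$, which is the first requirement of Definition \ref{def32b}.

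Next I would restrict $x$ to $\widetilde{\mathbb{T}}$ and denote the restriction by $\widetilde{x}$. The key point is to show that $\widetilde{x}$ coincides with the unique almost periodic solution on $\widetilde{\mathbb{T}}$ provided by Lemma \ref{un}. The exponential dichotomy of \eqref{e211} on $\mathbb{T}$ restricts to an exponential dichotomy of the corresponding dynamic equation on $\widetilde{\mathbb{T}}$ (by Lemmas \ref{lem34} and \ref{lem35} together with the projection $P$ restricted to $\widetilde{X}(t)$), so any bounded solution on $\widetilde{\mathbb{T}}$ is unique. Since $\widetilde{x}$ is bounded on $\widetilde{\mathbb{T}}$ and, by the agreement of $\Delta$- and $\widetilde{\Delta}$-derivatives at right-dense points and direct verification at right-scattered points, solves the restricted system, uniqueness forces $\widetilde{x}$ to equal the almost periodic solution of Lemma \ref{un}.

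Once this identification is in place, the conclusion is immediate. For every $\varepsilon>0$, the almost periodicity of $\widetilde{x}$ on $\widetilde{\mathbb{T}}$ in the sense of \cite{li1} yields that the set
\[
\{\tau\in\widetilde{\Pi}:|x(t+\tau)-x(t)|<\varepsilon,\ \forall\, t\in\widetilde{\mathbb{T}}\}
\]
is relatively dense in $\mathbb{R}$. Combined with $x\in BUC(\mathbb{T},\mathbb{E}^{n})$ from the first step, this is precisely the defining condition of $AP(\mathbb{T},\mathbb{E}^{n})$ in Definition \ref{def32b}, finishing the argument.

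The main obstacle I anticipate is the middle step: rigorously matching the bounded $\mathbb{T}$-solution with the almost periodic $\widetilde{\mathbb{T}}$-solution. The two representations are written with different measures ($\Delta s$ on $\mathbb{T}$ versus $\widetilde{\Delta}s$ on $\widetilde{\mathbb{T}}$), so a direct integral-by-integral comparison is delicate; at right-scattered points of $\widetilde{\mathbb{T}}$ one may have $\widetilde{\sigma}(t)\neq\sigma(t)$, and checking $\widetilde{x}^{\widetilde{\Delta}}=\widetilde{A}\widetilde{x}+\widetilde{f}$ requires care. The cleanest route is to bypass the formulas and rely on the uniqueness of bounded solutions under exponential dichotomy, which is precisely why Lemmas \ref{bs} and \ref{un} have been stated in the order they are.
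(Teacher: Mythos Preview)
Your proposal is correct and follows essentially the same route the paper takes: the paper does not give a detailed proof but simply states that the lemma follows from Definition \ref{def32} together with Lemmas \ref{bs} and \ref{un}, which is exactly the skeleton you have fleshed out. Your discussion of the identification of the bounded $\mathbb{T}$-solution with the almost periodic $\widetilde{\mathbb{T}}$-solution via uniqueness under exponential dichotomy makes explicit the step the paper leaves implicit.
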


\section{Positive almost periodic solutions for Nicholson's blowflies models}

\setcounter{equation}{0}

\indent

In this section, we will state and prove the sufficient conditions
for the existence and  exponential stability of positive almost periodic
solutions of \eqref{e1}. Throughout this section, we restrict our discussion under Definition \ref{def32b}.

Set $\mathbb{B}=\{\varphi\in C(\mathbb{T},\mathbb{R}^n):\varphi=(\varphi_1,\varphi_2,\ldots,\varphi_n)$ is
an almost periodic function on $\mathbb{T}$\} with the norm
$||\varphi||_\mathbb{B}=\max\limits_{1\leq i\leq
n}\sup\limits_{t\in\mathbb{T}}|\varphi_i(t)|$, then $\mathbb{B}$ is
a Banach space. Denote
  $\mathbb{C}=C([t_0-\theta,t_0]_\mathbb{T},\mathbb{R}^n)$ and $C\{A_1,A_2\}=\{\varphi=(\varphi_1,\varphi_2,\ldots,\varphi_n)\in \mathbb{C} : A_1\leq \varphi_i(s)\leq A_2, s\in[t_0-\theta,
t_0]_\mathbb{T},\,i=1,2,\ldots,n\}$, where $0<A_1<A_2$ are constants.

In the proofs of our results of this section, we need the following facts: There exists a unique $\varsigma\in (0,1)$ such that $\frac{1-\varsigma}{e^\varsigma}=\frac{1}{e^2} (\varsigma\approx 0.7215354)$ and  $\sup\limits_{x\geq \varsigma}\big|\frac{1-x}{e^x}\big|=\frac{1}{e^2}$. The function
$xe^{-x}$  decreases on $[1, +\infty)$.
\begin{lemma}\label{l2}
Assume that the following conditions hold.
\begin{itemize}
\item[$(H_{1})$]
$c_i, b_{ik}, \beta_{ij}, \alpha_{ij},\tau_{ij}\in AP(\mathbb{T},\mathbb{R}^+)$ and
$c_i^->0,  b_{ik}^->0, \beta_{ij}^->0, \alpha_{ij}^->0$,  $t-\tau_{ij}(t)\in \mathbb{T}$, $i,k,j=1,2,\ldots,n$.
\item[$(H_{2})$] $\sum\limits_{k=1,k\neq
i}^n\frac{b_{ik}^+}{c_{i}^-}<1$,\,$i=1,2,\ldots,n$.
\item[$(H_{3})$] There exist positive constants $A_1, A_2$ satisfy
\[
A_2> \max\limits_{1\leq i\leq n}\bigg\{\bigg[1-\sum\limits_{k=1,k\neq
i}^n\frac{b_{ik}^+}{c_{i}^-}\bigg]^{-1}\sum\limits_{j=1}^n
\frac{\beta_{ij}^+}{c_i^-\alpha_{ij}^-e}\bigg\}
\]
and
\[
\min\limits_{1\leq i\leq n}\bigg\{\bigg[1-\sum\limits_{k=1,k\neq
i}^n\frac{b_{ik}^-}{c_{i}^+}\bigg]^{-1}\sum\limits_{j=1}^n
A_2\frac{\beta_{ij}^-}{c_{i}^+}e^{-\alpha_{ij}^+A_2}\bigg\}>A_1\geq\frac{\varsigma}{\min\limits_{1\leq i,j\leq n}\{\alpha_{ij}^-\}}.
\]
\end{itemize}
 Then the solution $x(t)=(x_1(t),x_2(t),\ldots,x_n(t))$ of \eqref{e1} with the initial
value $\varphi\in C\{A_1,A_2\}$
 satisfies
\begin{eqnarray*}
A_1\leq x_i(t)\leq A_2,\,\,t\in [t_0,
+\infty)_\mathbb{T},\,i=1,2,\ldots,n.
\end{eqnarray*}
\end{lemma}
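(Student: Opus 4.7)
The plan is to establish both bounds by contradiction, examining the sign of the delta derivative of $x_{i}$ at the first time the bound in question is violated. Throughout, I will use the elementary facts $xe^{-\alpha x}\le 1/(e\alpha)$ for $x\ge 0$ and the fact that $xe^{-\alpha x}$ is decreasing for $\alpha x\ge 1$, together with the monotone regime pinpointed by $\varsigma$.

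\emph{Upper bound.} Suppose the set $\mathcal{E}=\{t\in(t_{0},\infty)_{\mathbb{T}}:x_{i}(t)>A_{2}\text{ for some }i\}$ is nonempty and let $t^{\star}=\inf\mathcal{E}$. Using continuity of each $x_{i}$, the initial condition $\varphi\in C\{A_{1},A_{2}\}$, and handling the right-dense and right-scattered cases by means of $\sigma$ and $\rho$, one extracts an index $i_{0}$ and a contact point $t^{\sharp}\in[t_{0},\infty)_{\mathbb{T}}$ at which $x_{i_{0}}(t^{\sharp})=A_{2}$, $x_{k}(s)\le A_{2}$ for all $k$ and all $s\le t^{\sharp}$, and $x_{i_{0}}^{\Delta}(t^{\sharp})\ge 0$. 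Substituting into \eqref{e1} and majorizing each Nicholson term by $\beta_{i_{0}j}^{+}/(e\alpha_{i_{0}j}^{-})$ gives
\[
x_{i_{0}}^{\Delta}(t^{\sharp})\le -c_{i_{0}}^{-}A_{2}+A_{2}\sum_{k\ne i_{0}}b_{i_{0}k}^{+}+\sum_{j=1}^{n}\frac{\beta_{i_{0}j}^{+}}{e\,\alpha_{i_{0}j}^{-}}.
\]
The first inequality in $(H_{3})$, equivalent to $A_{2}\bigl(c_{i_{0}}^{-}-\sum_{k\ne i_{0}}b_{i_{0}k}^{+}\bigr)>\sum_{j}\beta_{i_{0}j}^{+}/(e\alpha_{i_{0}j}^{-})$ (which has the correct sign thanks to $(H_{2})$), forces the right-hand side to be strictly negative, contradicting $x_{i_{0}}^{\Delta}(t^{\sharp})\ge 0$.

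\emph{Lower bound.} Knowing $x_{k}(t)\le A_{2}$ already, I argue symmetrically: assume some $x_{i}$ dips below $A_{1}$ after $t_{0}$, and extract an index $i_{0}$ and contact point $\tilde t^{\sharp}$ with $x_{i_{0}}(\tilde t^{\sharp})=A_{1}$, $x_{k}(s)\in[A_{1},A_{2}]$ for $s\le\tilde t^{\sharp}$, and $x_{i_{0}}^{\Delta}(\tilde t^{\sharp})\le 0$. The condition $A_{1}\ge\varsigma/\min_{i,j}\alpha_{ij}^{-}$ places $\alpha_{ij}(t)x_{i}(t-\tau_{ij}(t))$ in the regime where $xe^{-\alpha x}$ is monotonically controlled on $[A_{1},A_{2}]$, so that $x_{i}(\cdot)e^{-\alpha_{ij}(\cdot)x_{i}(\cdot)}\ge A_{2}e^{-\alpha_{ij}^{+}A_{2}}$. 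Substituting into \eqref{e1} yields
\[
x_{i_{0}}^{\Delta}(\tilde t^{\sharp})\ge -c_{i_{0}}^{+}A_{1}+A_{1}\sum_{k\ne i_{0}}b_{i_{0}k}^{-}+\sum_{j=1}^{n}A_{2}\,\beta_{i_{0}j}^{-}e^{-\alpha_{i_{0}j}^{+}A_{2}},
\]
and the second inequality in $(H_{3})$ makes this strictly positive, contradicting $x_{i_{0}}^{\Delta}(\tilde t^{\sharp})\le 0$.

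The main obstacle is the extraction of the delta-differentiable contact point on a general time scale. On $\mathbb{R}$ one would simply take the first $t$ with $x_{i_{0}}(t)=A_{2}$, but on $\mathbb{T}$ the orbit may ``jump over'' $A_{2}$ across a right-scattered point $t_{\ast}$, in which case one must work with the pair $(\rho(t_{\ast}),t_{\ast})$ or $(t_{\ast},\sigma(t_{\ast}))$ and use the definition $x_{i_{0}}^{\Delta}(t)=(x_{i_{0}}(\sigma(t))-x_{i_{0}}(t))/\mu(t)$ to get the correct sign. A secondary, purely analytic point is verifying that $A_{2}e^{-\alpha_{ij}^{+}A_{2}}$ is a legitimate lower bound for $x_{i}(\cdot)e^{-\alpha_{ij}(\cdot)x_{i}(\cdot)}$ on $[A_{1},A_{2}]$, which is precisely the role of the lower threshold $\varsigma/\min\alpha_{ij}^{-}$ imposed on $A_{1}$.
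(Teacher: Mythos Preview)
Your argument follows the same contradiction-via-derivative strategy as the paper, and your calculus estimates are the right ones. The one substantive difference is how the ``first contact'' is set up. You aim for a point $t^\sharp$ with $x_{i_0}(t^\sharp)=A_2$ exactly, and you note yourself that on a general time scale the trajectory can overshoot across a scattered point; your suggested fix of evaluating at $\rho(t_\ast)$ or $t_\ast$ runs into trouble because then the leading term $-c_{i_0}(t)x_{i_0}(t)$ is evaluated where $x_{i_0}<A_2$, so you can no longer bound it above by $-c_{i_0}^{-}A_2$ and the contradiction collapses. The paper handles this differently: it proves instead that $x_i(t)<pA_2$ for every $p>1$ and then lets $p\to1$. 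At the first time $t_1$ where some $x_{i_0}(t_1)\ge pA_2$, it writes $x_{i_0}(t_1)=apA_2$ with $a\ge1$ (recording the possible overshoot), notes that every earlier value and every other component is $<apA_2$, and runs the same estimate with $apA_2$ throughout; after factoring out $apc_{i_0}^{-}$ the strict inequality in $(H_3)$ still forces a negative right-hand side. The lower bound is treated symmetrically with auxiliary parameters $l<1$ and $c\le1$. Once you graft this $p,a$ (resp.\ $l,c$) device onto your outline, the two proofs coincide.
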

\begin{proof}Let  $x(t)=x(t;t_0,\varphi)$, where $\varphi\in C\{A_1,A_2\}$.
At first, we prove that
\begin{eqnarray}\label{e3}
x_i(t)\leq A_2,\,\,t\in [t_0, \eta(\varphi))_\mathbb{T},\,i=1,2,\ldots,n,
\end{eqnarray}
where $[t_0, \eta(\varphi))_\mathbb{T}$ is the maximal
right-interval of existence of $x(t;t_0,\varphi)$.
To prove this claim, we show that for any $p>1$, the following
inequality holds
\begin{eqnarray}\label{e4}
x_i(t)<pA_2,\,\, t\in [t_0, \eta(\varphi))_\mathbb{T},\,i=1,2,\ldots,n.
\end{eqnarray}
By way of contradiction, assume that \eqref{e4} does not hold. Then,
there exists $i_0\in\{1,2,\dots,n\}$ and the first time $t_1\in[t_0,
\eta(\varphi))_\mathbb{T}$ such that
\[
x_{i_0}(t_1)\geq pA_2, \,\,x_{i_0}(t)<p A_2,\,\,
t\in[t_0-\theta,t_1)_{\mathbb{T}},
\]
\[
x_{k}(t)<p A_2,\,\, \mathrm{for} \, k\neq i_0,
\,\,t\in[t_0-\theta,t_1]_{\mathbb{T}},\,k=1,2,\dots,n.
\]
Therefore, there must be a positive constant $a\geq1$ such that
\[
x_{i_0}(t_1)=apA_2, \,\,x_{i_0}(t)<apA_2,\,\,
t\in[t_0-\theta,t_1)_{\mathbb{T}},
\]
\[
x_{k}(t)<apA_2,\,\, \mathrm{for} \, k\neq i_0,
\,\,t\in[t_0-\theta,t_1]_{\mathbb{T}},\,k=1,2,\dots,n.
\]
In view of the fact that $\sup\limits_{u\geq 0}ue^{-u}=\frac{1}{e}$ and $ap>1$,
we can obtain
\begin{eqnarray*}\label{c1}
0\leq x_{i_0}^{\Delta}(t_1)
&=&-c_{i_0}(t_1)x_{i_0}(t_1)+\sum\limits_{k=1,k\neq
i_0}^nb_{i_0k}(t_1)x_k(t_1)\nonumber\\
&&+\sum\limits_{j=1}^n
\frac{\beta_{i_0j}(t_1)}{\alpha_{i_0j}(t_1)}\alpha_{i_0j}(t_1)x_{i_0}(t_1-\tau_{i_0j}(t_1))
e^{-\alpha_{i_0j}(t_0)x_{i_0}(t_0-\tau_{i_0j}(t_0))}\nonumber\\
&\leq&-c_{i_0}^-apA_2+\sum\limits_{k=1,k\neq
i_0}^nb_{i_0k}^+apA_2+\sum\limits_{j=1}^n
\frac{\beta_{i_0j}^+}{\alpha_{i_0j}^-}\cdot\frac{1}{e}\nonumber\\
&\leq& apc_{i_0}^-\Big(-A_2+\sum\limits_{k=1,k\neq
i_0}^n\frac{A_2b_{i_0k}^+}{c_{i_0}^-}+\sum\limits_{j=1}^n
\frac{\beta_{i_0j}^+}{c_{i_0}^-\alpha_{i_0j}^-e}\Big)<0,
\end{eqnarray*}
which  is a contradiction and hence \eqref{e4} holds. Let
$p\rightarrow 1$, we have that \eqref{e3} is true. Next, we show
that
\begin{eqnarray}\label{e31}
x_i(t)\geq A_1,\,\,t\in [t_0, \eta(\varphi))_\mathbb{T},\,i=1,2,\ldots,n.
\end{eqnarray}
To prove this claim, we show that for any $l<1$, the following
inequality holds
\begin{eqnarray}\label{e41}
x_i(t)>lA_1,\,\,t\in [t_0, \eta(\varphi))_\mathbb{T},\,i=1,2,\ldots,n.
\end{eqnarray}
By way of contradiction, assume that \eqref{e41} does not hold.
Then, there exists $i_1\in\{1,2,\dots,n\}$ and the first time
$t_2\in[t_0, \eta(\varphi))_\mathbb{T}$ such that
\[
x_{i_1}(t_2)\leq lA_1, \,\,x_{i_1}(t)>l,\,\,
t\in[t_0-\theta,t_2)_{\mathbb{T}},
\]
\[
x_{k}(t)>lA_1,\,\, \mathrm{for} \, k\neq i_1,
\,\,t\in[t_0-\theta,t_2]_{\mathbb{T}},\,k=1,2,\dots,n.
\]
Therefore, there must be a positive constant $c\leq1$ such that
\[
x_{i_1}(t_2)=clA_1, \,\,x_{i_1}(t)>cl,\,\,
t\in[t_0-\theta,t_2)_{\mathbb{T}},
\]
\[
x_{k}(t)>clA_1,\,\, \mathrm{for} \, k\neq i_1,
\,\,t\in[t_0-\theta,t_2]_{\mathbb{T}},\,k=1,2,\dots,n.
\]
Noticing that $cl<1$, it follows that
\begin{eqnarray*}\label{c2}
0\geq x_{i_1}^{\Delta}(t_2)
&=&-c_{i_1}(t_2)x_{i_1}(t_2)+\sum\limits_{k=1,k\neq
i_1}^nb_{i_1k}(t_2)x_k(t_2)\nonumber\\
&& +\sum\limits_{j=1}^n
\beta_{i_1j}(t_2)x_{i_1}(t_2-\tau_{i_1j}(t_2))
e^{-\alpha_{i_1j}(t_2)x_{i_1}(t_2-\tau_{i_1j}(t_2))}\nonumber\\
&\geq&-c_{i_1}^+clA_1+\sum\limits_{k=1,k\neq
i_1}^nb_{i_1k}^-clA_1+\sum\limits_{j=1}^n
A_2\frac{\alpha_{i_1j}^+\beta_{i_1j}^-}{\alpha_{i_1j}^+}e^{-\alpha_{i_1j}^+A_2}\nonumber\\
&=& cl c_{i_1}^+\Big(-A_1+\sum\limits_{k=1,k\neq
i_1}^nA_1\frac{b_{i_1k}^-}{c_{i_1}^+}+\sum\limits_{j=1}^n
A_2\frac{\beta_{i_1j}^-}{c_{i_1}^+}e^{-\alpha_{i_1j}^+A_2}\Big)>0,
\end{eqnarray*}
which is a contradiction and hence \eqref{e41} holds. Let
$l\rightarrow 1$, we have that \eqref{e31} is true. Similar to the proof of Theorem 2.3.1 in \cite{jack}, we easily obtain $\eta(\varphi)=+\infty$. This completes
the proof.
\end{proof}
\begin{remark}\label{mu}
If $\mathbb{T}=\mathbb{R}$, then $\mu(t)\equiv 0$, so, $-c_i\in\mathcal{R}^+$. If $\mathbb{T}=\mathbb{Z}$, then $\mu(t)\equiv 1$, so, $-c_i\in\mathcal{R}^+$ if and only if $c_i<1$.
\end{remark}

\begin{theorem}\label{th31}
Assume that $(H_1)$  and $(H_3)$ hold. Suppose further that
\begin{itemize}
  \item [$(H_4)$]
   $-c_i\in
\mathcal{R}^+$, where $\mathcal{R}^+$ denotes the set of positive
regressive functions, $i=1,2,\ldots,n$.
 \item [$(H_5)$] $\sum\limits_{k=1,k\neq i}^nb_{ik}^++
\sum\limits^{n}_{j=1}\frac{\beta_{ij}^+}{e^2}<c_i^-, i=1,2,\ldots,n.$
\end{itemize}
Then system \eqref{e1} has a  positive almost
periodic solution
in the region $\mathbb{B}^*=\{\varphi|\,\,\varphi\in \mathbb{B}, A_1 \leq
\varphi_i(t)\leq A_2, t\in\mathbb{T},i=1,2,\ldots,n\}$.
\end{theorem}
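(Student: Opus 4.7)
The plan is to recast the existence of an almost periodic solution of \eqref{e1} as a fixed-point problem on the closed, bounded subset $\mathbb{B}^\ast$ of the Banach space $\mathbb{B}$, and then apply Banach's contraction principle. The four ingredients I will assemble are: the exponential dichotomy for diagonal systems supplied by Lemma \ref{lem37} (activated by $(H_1)$ and $(H_4)$); the integral representation of almost periodic solutions of the inhomogeneous linear system given by Lemma \ref{rem34}; the \emph{a priori} bounds encoded in $(H_3)$, which were already verified in Lemma \ref{l2}; and the sharp Lipschitz constant $\sup_{x\ge\varsigma}|(1-x)/e^x|=1/e^2$ for the Nicholson nonlinearity $xe^{-x}$ on $[\varsigma,+\infty)$.

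Concretely, for each $\varphi\in\mathbb{B}^\ast$ I would freeze $\varphi$ in the nonlinear part of \eqref{e1} and consider, for each $i$, the linear scalar equation $x_i^\Delta(t)=-c_i(t)x_i(t)+g_i^\varphi(t)$, where
$$g_i^\varphi(t):=\sum_{k\neq i}b_{ik}(t)\varphi_k(t)+\sum_{j=1}^n\beta_{ij}(t)\varphi_i(t-\tau_{ij}(t))e^{-\alpha_{ij}(t)\varphi_i(t-\tau_{ij}(t))}.$$
Using $(H_1)$, the closure of $AP(\mathbb{T})$ under sums and products (Theorem \ref{thm313}) and composition with uniformly continuous functions (Theorem \ref{thm317}), I would argue that $g_i^\varphi\in AP(\mathbb{T})$. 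By Lemmas \ref{lem37} and \ref{rem34} the operator
$$(\Phi\varphi)_i(t):=\int_{-\infty}^{t}e_{-c_i}(t,\sigma(s))\,g_i^\varphi(s)\,\Delta s,\qquad i=1,\ldots,n,$$
then maps $\mathbb{B}^\ast$ into $\mathbb{B}$, and its fixed points are exactly the almost periodic solutions of \eqref{e1} lying in the frozen box.

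Next I would verify $\Phi(\mathbb{B}^\ast)\subset\mathbb{B}^\ast$: using $\sup_{u\ge 0}ue^{-u}=1/e$ together with the elementary identity $\int_{-\infty}^{t}e_{-c_i^-}(t,\sigma(s))\,\Delta s=1/c_i^-$ (and Lemma 2.1 to replace $e_{-c_i}$ by $e_{-c_i^-}$), the upper estimate on $(\Phi\varphi)_i$ reduces exactly to the first inequality of $(H_3)$, while the lower estimate follows from the calculation already carried out for the state bounds in the proof of Lemma \ref{l2}, combined with the second inequality of $(H_3)$. To show $\Phi$ is a contraction, I would exploit $A_1\ge\varsigma/\min_{i,j}\alpha_{ij}^-$: it places $\alpha_{ij}(s)\varphi_i(s-\tau_{ij}(s))$ and $\alpha_{ij}(s)\psi_i(s-\tau_{ij}(s))$ in $[\varsigma,+\infty)$ for every $\varphi,\psi\in\mathbb{B}^\ast$, so the mean value theorem together with $\sup_{x\ge\varsigma}|(1-x)/e^x|=1/e^2$ yields $|ue^{-\alpha u}-ve^{-\alpha v}|\le |u-v|/e^2$ for the relevant arguments. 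Applying the dichotomy integral estimate then gives
$$\|\Phi\varphi-\Phi\psi\|_{\mathbb{B}}\le\max_{1\le i\le n}\frac{1}{c_i^-}\Bigl(\sum_{k\neq i}b_{ik}^++\sum_{j=1}^n\frac{\beta_{ij}^+}{e^2}\Bigr)\|\varphi-\psi\|_{\mathbb{B}},$$
whose multiplier is strictly less than $1$ by $(H_5)$. Since $\mathbb{B}^\ast$ is closed in $\mathbb{B}$, the Banach contraction mapping principle produces a unique fixed point $x^\ast=\Phi x^\ast\in\mathbb{B}^\ast$, which is the desired positive almost periodic solution.

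The step I expect to require the most care is showing that the delayed composition $t\mapsto\varphi_i(t-\tau_{ij}(t))$ lies in $AP(\mathbb{T})$ under Definition \ref{def32b}, because translation numbers must be drawn from the restricted set $\widetilde{\Pi}$ while the evaluation point $t-\tau_{ij}(t)$ need only lie in $\mathbb{T}$ (by $(H_1)$), not in $\widetilde{\mathbb{T}}$. I would handle this by picking common $\varepsilon$-translation numbers in $\widetilde{\Pi}$ for $\varphi_i$ and $\tau_{ij}$ via Theorem \ref{thm311}, invoking the uniform continuity of $\varphi_i$ that is built into Definition \ref{def32b} (since $AP\subset BUC$), and then splitting $|\varphi_i(t+\tau-\tau_{ij}(t+\tau))-\varphi_i(t-\tau_{ij}(t))|$ into a translation difference and a continuity difference in the standard way. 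Once that technicality is dispatched, the remaining fixed-point arithmetic is routine.
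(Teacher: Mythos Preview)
Your proposal is correct and follows essentially the same route as the paper: freeze $\varphi$ in the nonlinearity, invoke Lemma~\ref{lem37} and Lemma~\ref{rem34} to define the integral operator, use $(H_3)$ together with $\sup_{u\ge 0}ue^{-u}=1/e$ and the monotonicity of $ue^{-u}$ on $[1,\infty)$ to show self-mapping, and use the Lipschitz bound $1/e^2$ on $[\varsigma,\infty)$ with $(H_5)$ to get a contraction. Your explicit discussion of why the delayed composition $t\mapsto\varphi_i(t-\tau_{ij}(t))$ remains almost periodic under Definition~\ref{def32b} is a point the paper passes over silently, so your treatment is in fact slightly more careful there.
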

\begin{proof}
For any given $\varphi\in\mathbb{B}$, we consider the following almost periodic dynamic system:
\begin{eqnarray}\label{e311}
x_i^\Delta(t)&=&-c_i(t)x_i(t)+\sum\limits_{k=1,k\neq
i}^nb_{ik}(t)\varphi_k(t)\nonumber\\
&&+\sum\limits_{j=1}^n\beta_{ij}(t)\varphi_i(t-\tau_{ij}(t))e^{-\alpha_{ij}(t)\varphi_i(t-\tau_{ij}(t))},\,\,
i=1,2,\ldots,n.
\end{eqnarray}

Since $\min_{1\leq i\leq n}\{ c_i^-\}>0$, $t\in \mathbb{T}$, it follows from Lemma \ref{lem37} that the
linear system
\begin{eqnarray*}\label{e312}
x_i^\Delta(t)=-c_i(t)x_i(t),\,\,
i=1,2,\ldots,n
\end{eqnarray*}
admits an exponential dichotomy on $\mathbb{T}$. Thus, by Lemma \ref{rem34}, we obtain that system
\eqref{e311} has an almost periodic solution $x_\varphi=(x_{\varphi_1},x_{\varphi_2},\ldots,x_{\varphi_n})$, where
\begin{eqnarray*}
{x_{\varphi}}_i(t)&=&\int_{-\infty}^te_{-c_i}(t,\sigma(s))\bigg[\sum\limits_{k=1,k\neq
i}^nb_{ik}(s)\varphi_k(s)\nonumber\\
&&+\sum\limits_{j=1}^n\beta_{ij}(s)\varphi_i(s-\tau_{ij}(s))e^{-\alpha_{ij}(s)\varphi_i(s-\tau_{ij}(s))}\bigg]\Delta
s,\,\,i=1,2,\ldots,n.
\end{eqnarray*}

Define a mapping $T:\mathbb{B}^*\rightarrow\mathbb{B}^*$ by
\begin{eqnarray*}
T\varphi(t)=x_{\varphi}(t),\,\,\forall \varphi\in\mathbb{B}^*.
\end{eqnarray*}
Obviously, $\mathbb{B}^*=\{\varphi|\,\,\varphi\in \mathbb{B}, A_1 \leq
\varphi_i(t)\leq A_2, t\in\mathbb{T},i=1,2,\ldots,n\}$ is a closed  subset of $\mathbb{B}$. For any $\varphi\in\mathbb{B}^*$, by use of $(H_2)$, we have
\begin{eqnarray*}
{x_{\varphi}}_i(t)&\leq&\int_{-\infty}^te_{-c_i^-}(t,\sigma(s))\bigg[\sum\limits_{k=1,k\neq
i}^nb_{ik}^+A_2+\sum\limits_{j=1}^n\frac{\beta_{ij}^+}{\alpha_{ij}^-}\times\frac{1}{e}\bigg]\Delta s\\
&\leq& \frac{1}{c_i^-}\bigg[\sum\limits_{k=1,k\neq
i}^nb_{ik}^+A_2+\sum\limits_{j=1}^n\frac{\beta_{ij}^+}{\alpha_{ij}^-}\times\frac{1}{e}\bigg]\\
&\leq&A_2,\,\,i=1,2,\ldots,n
\end{eqnarray*}
and we also have
\begin{eqnarray*}
{x_{\varphi}}_i(t)&\geq&\int_{-\infty}^te_{-c_i^+}(t,\sigma(s))\bigg[\sum\limits_{k=1,k\neq
i}^nA_1b_{ik}^-+\sum\limits_{j=1}^n {\beta_{ij}^-} \varphi_i(s-\tau_{ij}(s))e^{-\alpha_{ij}^+\varphi_i(s-\tau_{ij}(s))}\bigg]\Delta s\\
&\geq& \frac{1}{c_i^+}\bigg[\sum\limits_{k=1,k\neq
i}^nA_1b_{ik}^-+\sum\limits_{j=1}^nA_2\beta_{ij}^-e^{-\alpha_{ij}^+A_2}\bigg]\\
&\geq&A_1,\,\,i=1,2,\ldots,n.
\end{eqnarray*}
Therefore, the mapping $T$ is a self-mapping from $\mathbb{B}^*$ to $\mathbb{B}^*$.

Next, we prove that the mapping $T$ is a contraction mapping on $\mathbb{B}^*$. Since
$\sup\limits_{u\geq \varsigma}|\frac{1-u}{e^u}|=\frac{1}{e^2}$, we find that
\begin{eqnarray*}
|xe^{-x}-ye^{-y}|&=&\Big|\frac{1-(x+\xi(y-x))}{e^{x+\xi(y-x)}}\Big||x-y|\\
&\leq&\frac{1}{e^2}|x-y|,\,x,y\geq \varsigma,\,0<\xi<1.
\end{eqnarray*}For any $\varphi=(\varphi_1,
\varphi_2, \ldots, \varphi_n)^T$, $\psi=(\psi_1, \psi_2, \ldots,
\psi_n)^T \in \mathbb{B}^*$, we obtain that
\begin{eqnarray*}
&&|(T\varphi)_i(t)-(T\psi)_i(t)|\nonumber\\
&\leq&\Big|\int_{-\infty}^t\mathrm{e}_{-c_i}(t,\sigma(s))\sum\limits_{k=1,k\neq
i}^nb_{ik}(s)\big(\varphi_k(s)-\psi_k(s)\big)\Delta
s\Big|\nonumber\\
&&+\bigg|\int_{-\infty}^t\mathrm{e}_{-c_i}(t,\sigma(s))
\sum\limits^{n}_{j=1}\beta_{ij}(s)\bigg(\varphi_i(s-\tau_{ij}(s))e^{-\alpha_{ij}(s)\varphi_i(s-\tau_{ij}(s))}
\nonumber\\
&&-\psi_i(s-\tau_{ij}(s))e^{-\alpha_{ij}(s)\psi_i(s-\tau_{ij}(s))}\bigg)\Delta
s\bigg|\nonumber\\
&\leq&\frac{1}{c_i^-}\sum\limits_{k=1,k\neq
i}^nb_{ik}^+\|\varphi-\psi\|_{\mathbb{B}}+\bigg|\int_{-\infty}^t\mathrm{e}_{-c_i}(t,\sigma(s))
\sum\limits^{n}_{j=1}\frac{\beta_{ij}(s)}{\alpha_{ij}(s)}\Big(\alpha_{ij}(s)\varphi_i(s-\tau_{ij}(s))\nonumber\\
&&\times e^{-\alpha_{ij}(s)\varphi_i(s-\tau_{ij}(s))}
-\alpha_{ij}(s)\psi_i(s-\tau_{ij}(s))e^{-\alpha_{ij}(s)\psi_i(s-\tau_{ij}(s))}\Big)\Delta
s\bigg|\nonumber\\
&\leq&\bigg(\frac{1}{c_i^-}\sum\limits_{k=1,k\neq i}^nb_{ik}^++
\sum\limits^{n}_{j=1}\frac{\beta_{ij}^+}{c_i^-e^2}\bigg)\|\varphi-\psi\|_{\mathbb{B}},\,\,i=1,2,\ldots,n.
\end{eqnarray*}
It follows that
\[
\|T\phi-T\psi\|_\mathbb{B}<\|\varphi-\psi\|_{\mathbb{B}},
\]
which implies that $T$ is a contraction. By the fixed point
theorem in Banach space, $T$ has a unique fixed point $\varphi^*\in\mathbb{B}^*$ such that $T\varphi^*=\varphi^*$. In view of \eqref{e311}, we see that $\varphi^*$  is a solution of \eqref{e1}. Therefore, \eqref{e1} has a positive almost periodic solution in the region $\mathbb{B}^*$. This completes the
proof.
\end{proof}

\begin{definition}
Let $x^*(t)=(x_1^*(t), x_2^*(t), \ldots, x_n^*(t))^T$ be an almost
periodic solution of \eqref{e1} with initial value
$\varphi^*(s)=(\varphi_1^*(s), \varphi_2^*(s), \ldots,
\varphi_n^*(s))^T\in C\{A_1,A_2\}$. If there exist positive constants $\lambda$ with
$\ominus\lambda\in \mathcal{R}^{+}$ and $M>1$ such that such that
for an arbitrary solution $x(t)=(x_1(t), x_2(t),
\ldots, x_n(t))^T$
of \eqref{e1} with initial value $\varphi(s)=(\varphi_1(s),
\varphi_2(s), \ldots, \varphi_n(s))^T\in C\{A_1,A_2\}$ satisfies
\[
||x-x^*||\leq M||\varphi-\varphi^*||e_{\ominus\lambda}(t,t_0),\,\,
t_0\in [-\theta,\infty)_{\mathbb{T}},\,t\geq t_0,
\]
where $||\varphi-\varphi^*||_0=\max\limits_{1\leq i\leq n}\bigg\{\sup\limits_{t\in [t_0-\theta,t_0]}|\varphi_i(t)-\varphi_i^*(t)|\bigg\}$ for $\varphi,\psi\in C\{A_1,A_2\}$.
Then the solution $x^*(t)$ is said to be  exponentially stable.
\end{definition}

\begin{theorem}\label{th32}
Assume that $(H_1)$, $(H_3)$-$(H_5)$ hold. Then the positive almost periodic solution $x^*(t)$ in the region $\mathbb{B}^*$ of \eqref{e1} is unique and  exponentially stable.
\end{theorem}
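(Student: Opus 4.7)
Let $x^*\in\mathbb{B}^*$ be the almost periodic solution furnished by Theorem~\ref{th31} and let $x$ be any solution of \eqref{e1} with initial data in $C\{A_1,A_2\}$. Writing $y_i:=x_i-x_i^*$, Lemma~\ref{l2} keeps both $x_i(t)$ and $x_i^*(t)$ inside $[A_1,A_2]$, so $(H_3)$ forces $\alpha_{ij}(t)x_i(s),\,\alpha_{ij}(t)x_i^*(s)\ge\varsigma$. The Lipschitz inequality $|ue^{-u}-ve^{-v}|\le e^{-2}|u-v|$ for $u,v\ge\varsigma$, already exploited in the proof of Theorem~\ref{th31}, then yields
\[
\beta_{ij}(t)\bigl|x_i(s)e^{-\alpha_{ij}(t)x_i(s)}-x_i^*(s)e^{-\alpha_{ij}(t)x_i^*(s)}\bigr|\le \frac{\beta_{ij}^+}{e^2}|y_i(s)|.
\]
Subtracting the two copies of \eqref{e1} gives $y_i^{\Delta}(t)=-c_i(t)y_i(t)+R_i(t)$ with $|R_i(t)|\le\sum_{k\ne i}b_{ik}^+|y_k(t)|+\sum_j(\beta_{ij}^+/e^2)|y_i(t-\tau_{ij}(t))|$.

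To pick an exponential rate, I would define
\[
\Gamma_i(\omega):=c_i^- -\omega-\exp(\omega\bar\mu)\Bigl[\sum_{k\ne i}b_{ik}^+ +\sum_{j=1}^{n}\frac{\beta_{ij}^+}{e^2}e^{\omega\theta}\Bigr],\qquad \bar\mu:=\sup_{t\in\mathbb{T}}\mu(t),
\]
where boundedness of $\mu$ follows from the structure of an almost periodic time scale. By $(H_5)$, $\Gamma_i(0)>0$, and $\Gamma_i$ is continuous, so I fix $\lambda>0$ small enough that $\Gamma_i(\lambda)>0$ for every $i$ and $\ominus\lambda\in\mathcal{R}^+$, the latter being legitimate thanks to $(H_4)$. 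The factor $e^{\omega\theta}$ is inserted to absorb the delay shift in the next step, while $\exp(\omega\bar\mu)$ compensates for the graininess that appears when $e_{\ominus\lambda}(\cdot,t_0)$ is pushed through the time-scale exponential $e_{-c_i}(t,\sigma(\cdot))$.

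With $\lambda$ fixed, the exponential bound is established by a first-exit contradiction argument starting from the variation-of-constants representation
\[
y_i(t)=e_{-c_i}(t,t_0)y_i(t_0)+\int_{t_0}^{t}e_{-c_i}(t,\sigma(s))R_i(s)\,\Delta s.
\]
Fix $M>1$, set $N:=\|\varphi-\varphi^*\|_0$, and suppose that $|y_i(t)|\le MN\,e_{\ominus\lambda}(t,t_0)$ fails for the first time at some $t^*>t_0$ and some index $i^*$. Insert $|y_k(s)|\le MN\,e_{\ominus\lambda}(s,t_0)$ for $s<t^*$ and $|y_{i^*}(s-\tau_{i^*j}(s))|\le MN\,e^{\lambda\theta}\,e_{\ominus\lambda}(s,t_0)$ into the representation, and use the elementary time-scale bound $e_{-c_{i^*}}(t^*,\sigma(s))\le e^{\lambda\bar\mu}e_{-c_{i^*}}(t^*,s)$ to evaluate the resulting integral. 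This should collapse the right-hand side to a quantity strictly less than $MN\,e_{\ominus\lambda}(t^*,t_0)\bigl[1-\Gamma_{i^*}(\lambda)/c_{i^*}^-\bigr]<MN\,e_{\ominus\lambda}(t^*,t_0)$, contradicting equality at $t^*$. The \emph{main obstacle} is precisely this time-scale bookkeeping: forcing the integral against $e_{-c_{i^*}}(t^*,\sigma(s))$ to interact cleanly with $e_{\ominus\lambda}(\cdot,t_0)$ across graininess and with the delay factor $e^{\lambda\theta}$, so that everything collapses to $\Gamma_{i^*}(\lambda)$.

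Uniqueness then follows for free: if $\bar x^*\in\mathbb{B}^*$ is another almost periodic solution, applying the bound just established with $x=\bar x^*$ yields $|\bar x^*(t)-x^*(t)|\to 0$ as $t\to+\infty$. But $\bar x^*-x^*$ is itself almost periodic (by Theorems~\ref{thm312} and~\ref{thm313}), and a bounded almost periodic function that tends to zero at $+\infty$ must vanish identically on $\widetilde{\mathbb{T}}$, and hence on all of $\mathbb{T}$ by the almost periodic structure. Therefore $\bar x^*\equiv x^*$, proving uniqueness.
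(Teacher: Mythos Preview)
Your plan mirrors the paper's proof almost exactly: Lipschitz bound via $\sup_{u\ge\varsigma}|(1-u)e^{-u}|=e^{-2}$, variation-of-constants for the difference $y_i$, a function $\Gamma_i(\omega)$ whose positivity at $0$ follows from $(H_5)$, a small rate $\lambda$ with $\Gamma_i(\lambda)>0$, and a first-exit contradiction. The paper even writes $\Gamma_i$ with the individual delays $\tau_{ij}^+$ rather than the global $\theta$, but that is cosmetic.

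There is, however, one genuine gap in your write-up. You say ``Fix $M>1$'' and then claim the right-hand side collapses to $MN\,e_{\ominus\lambda}(t^*,t_0)\bigl[1-\Gamma_{i^*}(\lambda)/c_{i^*}^-\bigr]$. This ignores the contribution of the initial term $e_{-c_{i^*}}(t^*,t_0)\,y_{i^*}(t_0)$ that appears in your own variation-of-constants formula. An \emph{arbitrary} $M>1$ does not suffice to absorb that term; after dividing by $MN\,e_{\ominus\lambda}(t^*,t_0)$ it contributes $\tfrac{1}{M}\,e_{-(c_{i^*}^- -\lambda)}(t^*,t_0)$, and the contradiction closes only when $M$ is chosen large enough that
\[
\frac{1}{M}\;\le\;\frac{\exp(\lambda\bar\mu)}{c_{i^*}^- -\lambda}\Bigl[\sum_{k\ne i^*}b_{i^*k}^+ +\frac{1}{e^2}\sum_{j}\beta_{i^*j}^+e^{\lambda\tau_{i^*j}^+}\Bigr].
\]
The paper makes this explicit by taking
\[
M=\max_{1\le i\le n}\frac{c_i^-}{\sum_{k\ne i}b_{ik}^+ +e^{-2}\sum_j\beta_{ij}^+},
\]
checking $M>1$ via $(H_5)$, and then verifying the inequality above. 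With this choice the initial and integral pieces combine as
\[
\Bigl[\tfrac{1}{M}-\tfrac{(\cdots)}{c_{i^*}^- -\lambda}\Bigr]e_{-(c_{i^*}^- -\lambda)}(t^*,t_0)+\tfrac{(\cdots)}{c_{i^*}^- -\lambda}<1,
\]
yielding the contradiction. Once you specify $M$ in this way your argument is complete and coincides with the paper's; your more detailed uniqueness argument (almost periodic function tending to zero must vanish) is a fine alternative to the paper's one-line remark.
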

\begin{proof}
By Theorem \ref{th31}, \eqref{e1} has a positive almost periodic solution
$x_i^*(t)$ in the region $\mathbb{B}^*$.  Let
$x(t)=(x_1(t),x_2(t),\ldots, x_n(t))^T$ be any arbitrary solution of
\eqref{e1} with initial value
$\varphi(s)=(\varphi_1(s),\varphi_2(s),\ldots,\varphi_n(s))^T\in C\{A_1,A_2\}$. Then
it follows from \eqref{e1} that for $t\geq t_0, i=1,2,\ldots,n$,
\begin{eqnarray}\label{e5}
&&(x_i(t)-x_i^*(t))^\Delta\nonumber\\
&=&-c_i(t)(x_i(t)-x_i^*(t))+\sum\limits_{k=1,k\neq
i}^nb_{ik}(t)(x_k(t)-x_k^*(t))\nonumber\\
&&+\sum\limits_{j=1}^n\beta_{ij}(t)\big[x_i(t-\tau_{ij}(t))e^{-\alpha_{ij}(t)x_i(t-\tau_{ij}(t))}
-x_i^*(t-\tau_{ij}(t))e^{-\alpha_{ij}(t)x_i^*(t-\tau_{ij}(t))}\big].
\end{eqnarray}
The initial condition of \eqref{e5} is
\[
\psi_i(s)=\varphi_i(s)-x_i^*(s),\,\,s\in [t_0-\theta,
t_0]_{\mathbb{T}},\,i=1,2,\ldots,n.
\]
For convenience, we denote $u_i(t)=x_i(t)-x_i^*(t), i=1,2,\ldots,n$.
Then, by \eqref{e5},  we have
\begin{eqnarray}\label{e42}
u_i(t)&=&u_i(t_0)e_{-c_i}(t,t_0)+\int_{t_0}^{t}e_{-c_{i}}(t,\sigma(s))\sum\limits_{k=1,k\neq
i}^nb_{ik}(s)u_k(s)\Delta
s\nonumber\\
&&
+\int_{t_0}^{t}e_{-c_{i}}(t,\sigma(s))\sum\limits_{j=1}^n\beta_{ij}(s)\bigg[x_i(s-\tau_{ij}(s))e^{-\alpha_{ij}(s)x_i(s-\tau_{ij}(s))}
\nonumber\\
&&-x_i^*(s-\tau_{ij}(s))e^{-\alpha_{ij}(s)x_i^*(s-\tau_{ij}(s))}\bigg]\Delta
s, \,\,t\geq t_0, i=1,2,\ldots,n.
\end{eqnarray}
For $\omega\in \mathbb{R}$, let $\Gamma_{i}(\omega)$ be defined by
{\setlength\arraycolsep{2pt}\begin{eqnarray*}
\Gamma_{i}(\omega)&=&c_{i}^{-}-\omega-\exp\{\omega\sup\limits_{s\in
\mathbb{T}}\mu(s)\}\bigg(\sum\limits_{k=1,k\neq
i}^nb_{{i}k}^++\frac{1}{e^2}\sum\limits^{n}_{j=1}\beta_{ij}^+\exp\{\omega\tau_{ij}^+\}\bigg),\,i=1,2,\ldots,n.
\end{eqnarray*}}
In view of $(H_{2})$,   we have that
{\setlength\arraycolsep{2pt}\begin{eqnarray*}
\Gamma_{i}(0)=c_{i}^{-}-\bigg(\sum\limits_{k=1,k\neq
i}^nb_{{i}k}^++\frac{1}{e^2}\sum\limits^{n}_{j=1}\beta_{ij}^+\bigg)>
0, i=1,2,\ldots,n.
\end{eqnarray*}}
Since $\Gamma_{i}(\omega)$ is continuous on $[0,+\infty)$ and
$\Gamma_{i}(\omega)\rightarrow -\infty$ as $\omega\rightarrow
+\infty$, so there exists $\omega_{i}> 0$ such that
$\Gamma_{i}(\omega_{i})=0$ and $\Gamma_{i}(\omega)> 0$ for
$\omega\in(0,\omega_{i}), i=1,2,\ldots,n$. By choosing a positive
constant
$a=\min\big\{\omega_{1},\omega_{2},\ldots,\omega_{n}\big\}$, we have
$\Gamma_{i}(a)\geq 0, i=1,2,\ldots,n.$ Hence, we can choose a
positive constant $0< \alpha < \min\big\{a,\min\limits_{1\leq i \leq
n}\{c_{i}^{-}\}\big\}$ such that
\[
\Gamma_{i}(\alpha)>0,\,\,i=1,2,\ldots,n,
\]
which implies that {\setlength\arraycolsep{2pt}\begin{eqnarray*}
&&\frac{\exp\{\alpha\sup\limits_{s\in
\mathbb{T}}\mu(s)\}}{c_{i}^{-}-\alpha}\bigg(\sum\limits_{k=1,k\neq
i}^nb_{{i}k}^++\frac{1}{e^2}\sum\limits^{n}_{j=1}\beta_{ij}^+\exp\{\alpha\tau_{ij}^+\}\bigg)<
1,\,\,i=1,2,\ldots,n.
\end{eqnarray*}}
Take
\[
M=\max\limits_{1\leq i\leq
n}\bigg\{\frac{c_{i}^-}{\sum\limits_{k=1,k\neq
i}^nb_{{i}k}^++\frac{1}{e^2}\sum\limits^{n}_{j=1}\beta_{ij}^+}\bigg\}.
\]
It follows from $(H_5)$ that $M>1$. Besides, we can obtain that
{\setlength\arraycolsep{2pt}\begin{eqnarray*} \frac{1}{M}&<&
\frac{\exp\{\alpha\sup\limits_{s\in
\mathbb{T}}\mu(s)\}}{c_{i}^{-}-\alpha}\bigg(\sum\limits_{k=1,k\neq
i}^nb_{{i}k}^++\frac{1}{e^2}\sum\limits^{n}_{j=1}\beta_{ij}^+\exp\{\alpha\tau_{ij}^+\}\bigg).
\end{eqnarray*}}
In addition, noticing  that $e_{\ominus\alpha}(t,t_0)\geq1$ for
$t\in[t_0-\theta,t_0]_{\mathbb{T}}$. Hence, it is obvious that
\[
||u||_{\mathbb{B}}\leq M
e_{\ominus\alpha}(t,t_0)\|\psi\|_0,\,\forall\,
t\in[t_0-\theta,t_0]_{\mathbb{T}}.
\]
We claim that
\begin{equation}\label{e43}
||u||_{\mathbb{B}}\leq M
e_{\ominus\alpha}(t,t_0)\|\psi\|_{0},\,\,\,\,\,\forall\,
t\in(t_0,+\infty)_{\mathbb{T}}.
\end{equation}
To prove this claim, we show that for any $p>1$, the following
inequality holds
\begin{equation}\label{e44}
||u||_{\mathbb{B}}< pM
e_{\ominus\alpha}(t,t_0)\|\psi\|_0,\,\,\,\,\,\forall\,
t\in(t_0,+\infty)_{\mathbb{T}},
\end{equation}
which implies that, for $i=1,2,\ldots,n$, we have
\begin{equation}\label{e45}
|u_i(t)|< pM
e_{\ominus\alpha}(t,t_0)\|\psi\|_{0},\,\forall
t\in(t_0,+\infty)_{\mathbb{T}}.
\end{equation}
By way of contradiction, assume that \eqref{e45} is not true. Then
there exists $t_1\in(t_0,+\infty)_{\mathbb{T}}$ and
$i_0\in\{1,2,\ldots,n\}$ such that
\[
|u_{i_0}(t_1)|\geq pM
e_{\ominus\alpha}(t_1,t_0)\|\psi\|_{0},
\,\,|u_{i_0}(t)|<pM
e_{\ominus\alpha}(t,t_0)\|\psi\|_{0},\,\,
t\in(t_0,t_1)_{\mathbb{T}},
\]
\[
|u_{k}(t)|\leq pM
e_{\ominus\alpha}(t,t_0)\|\psi\|_{0},\,\,\mathrm{for} \,
k\neq i_0,\, \,t\in(t_0,t_1]_{\mathbb{T}}, k=1,2,\dots,n.
\]
Therefore, there must be a constant $q\geq1$ such that
\[
|u_{i_0}(t_1)|= qpM
e_{\ominus\alpha}(t_1,t_0)\|\psi\|_{0},
\,\,|u_{i_0}(t)|<qpM
e_{\ominus\alpha}(t,t_0)\|\psi\|_{0},\,\,
t\in(t_0,t_1)_{\mathbb{T}},
\]
\[
|u_{k}(t)|<qpM
e_{\ominus\alpha}(t_1,t_0)\|\psi\|_{0},\,\,\mathrm{for}
\, k\neq i_0,\, \,t\in(t_0,t_1]_{\mathbb{T}}, k=1,2,\dots,n.
\]
According to \eqref{e42}, we have
\begin{eqnarray*}
|u_{i_0}(t_1)|&=&\bigg|u_{i_0}(t_0)e_{-c_{i_0}}(t_1,t_0)+\int_{t_0}^{t_1}e_{-c_{{i_0}}}(t_1,\sigma(s))\sum\limits_{k=1,k\neq
i_0}^nb_{{i_0}k}(s)u_k(s)\Delta
s\nonumber\\
&&
+\int_{t_0}^{t_1}e_{-c_{{i_0}}}(t_1,\sigma(s))
\sum\limits_{j=1}^n\beta_{{i_0}j}(s)\big[x_{i_0}(s-\tau_{{i_0}j}(s))e^{-\alpha_{{i_0}j}(s)x_{i_0}(s-\tau_{{i_0}j}(s))}
\nonumber\\
&&-x_{i_0}^*(s-\tau_{{i_0}j}(s))e^{-\alpha_{{i_0}j}(s)x_{i_0}^*(s-\tau_{{i_0}j}(s))}\big]\Delta
s\bigg|\nonumber\\
&\leq& e_{-c_{i_0}}(t_1,t_0)\|\psi\|_{0}+qpM
e_{\ominus\alpha}(t_1,t_0)\|\psi\|_{0}\nonumber\\
&&\times\int_{t_0}^{t_1}e_{-c_{i_0}}(t_1,\sigma(s))e_{\alpha}(t_1,\sigma(s))
\bigg(\sum\limits_{k=1,k\neq
i_0}^nb_{{i_0}k}^+e_{\alpha}(\sigma(s),s)\nonumber\\&&+
\sum\limits^{m}_{j=1}\frac{\beta_{i_0j}^+}{e^2}e_{\alpha}(\sigma(s),s-\tau_{i_0j}(s))\bigg)\Delta
s\nonumber\\
&\leq& e_{-c_{i_0}}(t_1,t_0)\|\psi\|_{0}+qpM
e_{\ominus\alpha}(t_1,t_0)\|\psi\|_{0}\nonumber\\
&&\times\int_{t_0}^{t_1}e_{-c_{i_0}\oplus\alpha}(t_1,\sigma(s))
\bigg(\sum\limits_{k=1,k\neq
i}^nb_{{i_0}k}^+\exp\{\alpha\sup\limits_{s\in
\mathbb{T}}\mu(s)\}\nonumber\\&&+
\sum\limits^{m}_{j=1}\frac{\beta_{i_0j}^+}{e^2}\exp\{\alpha(\tau_{i_0j}^++\sup\limits_{s\in
\mathbb{T}}\mu(s))\}\bigg)\Delta s
\nonumber\\
&=& e_{-c_{i_0}}(t_1,t_0)\|\psi\|_{0}+qpM
e_{\ominus\alpha}(t_1,t_0)\|\psi\|_{0}\exp\{\alpha\sup\limits_{s\in
\mathbb{T}}\mu(s)\}\bigg(\sum\limits_{k=1,k\neq
i_0}^nb_{{i_0}k}^+\nonumber\\&&+
\sum\limits^{m}_{j=1}\frac{\beta_{i_0j}^+}{e^2}\exp\{\alpha\tau_{i_0j}^+\}\bigg)
\int_{t_0}^{t_1}e_{-c_{i_0}\oplus\alpha}(t_1,\sigma(s)) \Delta s
\nonumber\\
&=&qpM
e_{\ominus\alpha}(t_1,t_0)\|\psi\|_{0}\bigg\{\frac{1}{qpM}e_{-c_{i_0}\oplus\alpha}(t_1,t_0)
+\exp\{\alpha\sup\limits_{s\in
\mathbb{T}}\mu(s)\}\bigg(\sum\limits_{k=1,k\neq
i_0}^nb_{{i_0}k}^+\nonumber\\&&+
\sum\limits^{m}_{j=1}\frac{\beta_{i_0j}^+}{e^2}\exp\{\alpha\tau_{i_0j}^+\}\bigg)
\int_{t_0}^{t_1}e_{-c_{i_0}\oplus\alpha}(t_1,\sigma(s))\Delta
s\bigg\}
\nonumber\\
&<&qpM
e_{\ominus\alpha}(t_1,t_0)\|\psi\|_{0}\bigg\{\frac{1}{qpM}e_{-(c_{i_0}^{-}-\alpha)}(t_1,t_0)
+\exp\{\alpha\sup\limits_{s\in
\mathbb{T}}\mu(s)\}\bigg(\sum\limits_{k=1,k\neq
i_0}^nb_{{i_0}k}^+\nonumber\\&&+
\sum\limits^{m}_{j=1}\frac{\beta_{i_0j}^+}{e^2}\exp\{\alpha\tau_{i_0j}^+\}\bigg)
\frac{1}{-(c_{i_0}^{-}-\alpha)}\int_{t_0}^{t_1}\big(-(c_{i_0}^{-}-\alpha)\big)e_{-(c_{i_0}^{-}-\alpha)}(t_1,\sigma(s))\Delta
s\bigg\}
\nonumber\\
&\leq&qpM
e_{\ominus\alpha}(t_1,t_0)\|\psi\|_{0}\Bigg\{\bigg[\frac{1}{qpM}-\frac{\exp\{\alpha\sup\limits_{s\in
\mathbb{T}}\mu(s)\}}{c_{i_0}^{-}-\alpha}\Big(\sum\limits_{k=1,k\neq
i_0}^nb_{{i_0}k}^+\nonumber\\&&
+\frac{1}{e^2}\sum\limits^{n}_{j=1}\beta_{ij}^+
\exp\{\alpha\tau_{i_0j}^+\}\Big)\bigg]e_{-(c_{i_0}^{-}-\alpha)}(t_1,t_0)
+\frac{\exp\{\alpha\sup\limits_{s\in
\mathbb{T}}\mu(s)\}}{c_{i_0}^{-}-\alpha}\Big(\sum\limits_{k=1,k\neq
i_0}^nb_{{i_0}k}^+\nonumber\\&&+\frac{1}{e^2}\sum\limits^{n}_{j=1}\beta_{i_0j}^+
\exp\{\alpha\tau_{i_0j}^+\}\Big)\Bigg\}
\nonumber\\
&<&qpM
e_{\ominus\alpha}(t_1,t_0)\|\psi\|_{0}\Bigg\{\bigg[\frac{1}{M}-\frac{\exp\{\alpha\sup\limits_{s\in
\mathbb{T}}\mu(s)\}}{c_{i_0}^{-}-\alpha}\Big(\sum\limits_{k=1,k\neq
i_0}^nb_{{i_0}k}^+\nonumber\\&&
+\frac{1}{e^2}\sum\limits^{n}_{j=1}\beta_{i_0j}^+
\exp\{\alpha\tau_{i_0j}^+\}\Big)\bigg]e_{-(c_{i_0}^{-}-\alpha)}(t_1,t_0)
+\frac{\exp\{\alpha\sup\limits_{s\in
\mathbb{T}}\mu(s)\}}{c_{i_0}^{-}-\alpha}\Big(\sum\limits_{k=1,k\neq
i_0}^nb_{{i_0}k}^+\nonumber\\
&&+\frac{1}{e^2}\sum\limits^{n}_{j=1}\beta_{i_0j}^+
\exp\{\alpha\tau_{i_0j}^+\}\Big)\Bigg\}
\nonumber\\
&<&qpM e_{\ominus\alpha}(t_1,t_0)\|\psi\|_{0},
\end{eqnarray*}
which is a contradiction. Therefore, \eqref{e45} and \eqref{e44}
hold. Let $p\rightarrow 1$, then \eqref{e43} holds. Hence, we have
that
\[||u||_{\mathbb{B}}\leq
M\|\psi\|_{0}e_{\ominus\alpha}(t,t_0),\,\, t\in
[t_0,+\infty)_{\mathbb{T}},
\]
which implies that the positive almost periodic solution $x^*(t)$ of
\eqref{e1} is  exponentially stable. The  exponential stability of $x^*(t)$ implies that the uniqueness of the positive almost periodic solution. The proof is complete.
\end{proof}

\begin{remark}
It is easy to see that under definitions of almost periodic time scales and  almost periodic functions in \cite{li1}, the conclusions of Theorems \ref{th31} and \ref{th32} are true.
\end{remark}
\begin{remark}
From Remark \ref{mu}, Theorem \ref{th31} and Theorem \ref{th32}, we can easily see that if $c_i(t)<1, i=1,2,\ldots,n$, then the
continuous-time Nicholson's blowflies models and the discrete-time
analogue have the same dynamical behaviors. This fact provides a theoretical basis for the numerical simulation of continuous-time
  Nicholson's blowflies models.
\end{remark}
\begin{remark}
Our results and methods of this paper are different from those in \cite{17}.
\end{remark}

\begin{remark}
When $\mathbb{T}=\mathbb{R}$ or $\mathbb{T}=\mathbb{Z}$, our results  of this section are also new. If we take $\mathbb{T}=\mathbb{R}, A_1=1, A_2=e$, then Lemma \ref{l2}, Theorem \ref{th31}  and Theorem \ref{th31} improve Lemma 2.4, Theorem 3.1 and Theorem 3.2 in \cite{8}, respectively.
\end{remark}

\section{ An example}
 \setcounter{equation}{0}
 \indent

In this section, we present an example to illustrate the feasibility
of our results obtained in previous sections.

\begin{example} In system \eqref{e1},
let $n=3$ and take coefficients as follows:
\[
c_1(t)=0.21+0.01\sin\bigg(\frac{1}{3}t\bigg),\,b_{12}(t)=0.03+0.01\cos \pi
t,\,b_{13}(t)=0.06+0.01\cos\sqrt{3} t,
\]
\[
\beta_{11}(t)=0.07+0.02\sin \pi
t,\,\beta_{12}(t)=0.15+0.01\cos\sqrt{3}t,\]
\[\beta_{13}(t)=0.15+0.01\sin
\bigg(\frac{5}{6}t\bigg),\,
\alpha_{11}(t)=\alpha_{12}(t)=\alpha_{13}(t)=0.91+0.09|\sin\sqrt{3}t|,
\]
\[\tau_{11}(t)=e^{0.2|\sin\pi
t|},\,\tau_{12}(t)=e^{0.4|\cos(\pi t+\frac{\pi}{2}) |},\tau_{13}(t)=e^{0.5|\sin\pi t|},
\]
\[
c_2(t)=0.3+0.02\sin \bigg(\frac{4}{3}t\bigg), b_{21}(t)=0.05+0.01\cos \sqrt{3}
t,\,b_{23}(t)=0.05+0.01\sin \sqrt{2} t,\]
\[\beta_{21}(t)=0.06+0.01\cos\pi t,\,
\beta_{22}(t)=0.04+0.01\cos\sqrt{3}t,\,\beta_{23}(t)=0.09+0.01\cos
\bigg(\frac{1}{3}t\bigg),\]
\[\alpha_{21}(t)=0.8+0.2\sin \sqrt{2}t,
\alpha_{22}(t)=0.8+0.2\cos\sqrt{2}t, \alpha_{23}(t)=0.8+0.2\sin
\pi t,\] \[\tau_{21}(t)=e^{0.2|\cos(\pi t+\frac{\pi}{2})|}, \tau_{22}(t)=e^{0.3|\sin
3\pi t|},
 \tau_{23}(t)=e^{0.1|\cos( 2\pi t+\frac{\pi}{2})|},
 \]
 \[
c_3(t)=0.41+0.01\sin \bigg(\frac{1}{3}t\bigg),\, b_{31}(t)=0.16+0.01\sin
\sqrt{3}t,\,b_{32}(t)=0.13+0.01\cos \sqrt{2} t,
\]
 \[
\beta_{31}(t)=0.02+0.01\cos \bigg(\frac{1}{6}t\bigg),\,
\beta_{32}(t)=0.032+0.01\cos\sqrt{2}t,
\]
\[ \beta_{33}(t)=0.022+0.001\sin
\bigg(\frac{1}{3}t\bigg),\,\alpha_{31}(t)=0.8+0.2|\sin\sqrt{3}t|,\]
\[
\alpha_{32}(t)=0.8+0.2\sin\sqrt{3}t, \alpha_{33}(t)=0.8+0.2\sin
\bigg(\frac{4}{3}t\bigg),
\]
\[
\tau_{31}(t)=e^{0.5|\cos( \pi t+\frac{3\pi}{2})|}, \tau_{32}(t)=e^{0.6|\cos(\pi t+\frac{3\pi}{2})|}, \tau_{33}(t)=e^{0.3|\sin 2\pi t|}.
\]
\end{example}

By calculating, we have
\[
c_1^-=0.2, c_1^+=0.22, b_{12}^-=0.02, b_{12}^+=0.04, b_{13}^-=0.05,
b_{13}^+=0.07,
\]
\[
\beta_{11}^-=0.05, \beta_{11}^+=0.09, \beta_{12}^-=0.14,
\beta_{12}^+=0.16, \beta_{13}^-=0.14, \beta_{13}^+=0.16,
\]
\[
\alpha_{11}^-=\alpha_{12}^-=\alpha_{13}^-=0.91,\,\,
\alpha_{11}^+=\alpha_{12}^+=\alpha_{13}^+=1,
\]
\[
c_2^-=0.28, c_2^+=0.32, b_{21}^-=0.04, b_{21}^+=0.06, b_{23}^-=0.04,
b_{23}^+=0.06,
\]
\[
\beta_{21}^-=0.05, \beta_{21}^+=0.07, \beta_{22}^-=0.03,
\beta_{22}^+=0.05, \beta_{23}^-=0.08, \beta_{23}^+=0.1,
\]
\[
\alpha_{21}^-=0.6, \alpha_{21}^+=1, \alpha_{22}^-=0.6,
\alpha_{22}^+=1, \alpha_{23}^-=0.6, \alpha_{23}^+=1,
\]
\[
c_3^-=0.4, c_3^+=0.43, b_{31}^-=0.15, b_{31}^+=0.17, b_{32}^-=12,
b_{32}^+=0.14,
\]
\[
\beta_{31}^-=0.01, \beta_{31}^+=0.03, \beta_{32}^-=0.022,
\beta_{32}^+=0.042, \beta_{33}^-=0.21, \beta_{33}^+=0.23,
\]
\[
\alpha_{31}^-=0.8, \alpha_{31}^+=1, \alpha_{32}^-=0.6,
\alpha_{32}^+=1, \alpha_{33}^-=0.6, \alpha_{23}^+=1.
\]
Hence,
\[\sum\limits_{k=1,k\neq
1}^3\frac{b_{1k}^+}{c_{1}^-}=\frac{b_{12}^+}{c_{1}^-}+\frac{b_{13}^+}{c_{1}^-}=\frac{0.04+0.07}{0.2}=0.55<1,\]
\[\sum\limits_{k=1,k\neq
2}^3\frac{b_{2k}^+}{c_{2}^-}=\frac{b_{21}^+}{c_{2}^-}+\frac{b_{23}^+}{c_{2}^-}=\frac{0.06+0.06}{0.28}=0.4286<1,\]
\[\sum\limits_{k=1,k\neq
3}^3\frac{b_{3k}^+}{c_{3}^-}=\frac{b_{31}^+}{c_{3}^-}+\frac{b_{32}^+}{c_{3}^-}=\frac{0.17+0.14}{0.4}= 0.775<1,\]
\[
b_{12}^++b_{13}^++\frac{\beta_{11}^+}{e^2}+\frac{\beta_{12}^+}{e^2}+\frac{\beta_{13}^+}{e^2}=0.04+0.07+\frac{0.09}{e^2}+\frac{0.16}{e^2}+\frac{0.16}{e^2}=0.1655<c_{1}^-=0.2,
\]
\[
b_{21}^++b_{23}^++\frac{\beta_{21}^+}{e^2}+\frac{\beta_{22}^+}{e^2}+\frac{\beta_{23}^+}{e^2}=0.06+0.06+\frac{0.07}{e^2}+\frac{0.05}{e^2}+\frac{0.07}{e^2}=0.1457<c_{2}^-=0.28,
\]
\[
b_{31}^++b_{32}^++\frac{\beta_{31}^+}{e^2}+\frac{\beta_{32}^+}{e^2}+\frac{\beta_{33}^+}{e^2}=0.03+0.04+\frac{0.06}{e^2}+\frac{0.33}{e^2}+\frac{0.23}{e^2}=0.1539<c_{3}^-=0.4,
\]

\[
A_2>\max\limits_{1\leq i\leq n}\bigg\{\bigg[1-\sum\limits_{k=1,k\neq
i}^n\frac{b_{ik}^+}{c_{i}^-}\bigg]^{-1}\sum\limits_{j=1}^n
\frac{\beta_{ij}^+}{c_i^-\alpha_{ij}^-e}\bigg\}=\max\limits_{1\leq i\leq n}\{2.7102,0.8431, 0.6449\}=2.7102.
\]
Let $A_2=2.72, \varsigma=0.7215, \min\{\alpha_{ij}^-\}=0.6,$ we have
\begin{eqnarray*}
\min\limits_{1\leq i\leq n}\bigg[1-\sum\limits_{k=1,k\neq
i}^n\frac{b_{ik}^-}{c_{i}^+}\bigg]^{-1}\sum\limits_{j=1}^n
A_2\frac{\beta_{ij}^-}{\alpha_{ij}^+c_{i}^+}e^{-\alpha_{ij}^+A_2}&=&\min\{1.2118,8.0234, 1.3407\}\\&=&1.2118>A_1>\frac{\varsigma}{\alpha_{ij}^-}\approx\frac{0.7215}{0.6}=1.2025.
\end{eqnarray*}

If $-c_i\in \mathcal{R}^+$, that is, $1-c_i(t)\mu(t)>0, i=1,2,3$, then it is easy to verify that all conditions of
Theorem \ref{th32} are satisfied. Therefore, the system in Example
4.1 has a unique positive almost periodic solution in the region $\mathbb{B}^*=\{\varphi|\,\,\varphi\in \mathbb{B}, A_1\leq
\varphi_i(t)\leq 2.72, t\in\mathbb{T},i=1,2,\ldots,n\}$, which is  exponentially stable.

Especially, if we take $\mathbb{T}=\mathbb{R}$ or $\mathbb{T}=\mathbb{Z}$, then  $1-c_i(t)\mu(t)>0, i=1,2,3$. Hence, in this case, the
continuous-time Nicholson's blowflies model \eqref{e1} and its discrete-time
analogue have the same dynamical behaviors (see Figures 1-8).

\begin{figure}
  \centering
  \includegraphics[width=10cm,height=6cm]{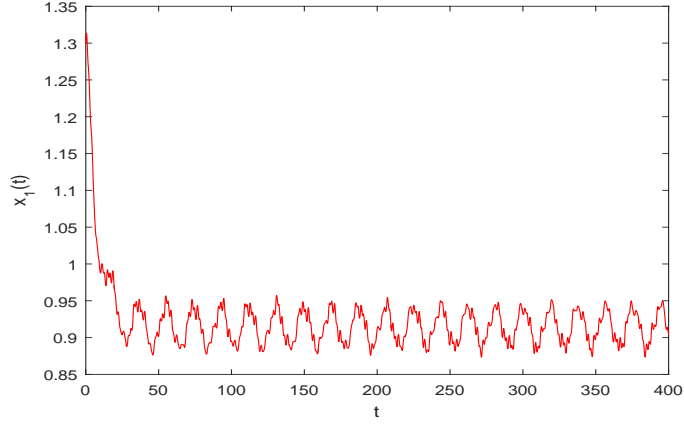}
  \caption{$\mathbb{T}=\mathbb{R}.$ Numerical solution $x_1(t)$ of system (4.1) for $(\varphi_1(t),\varphi_2(t),\varphi_3(t))=(1.3,1.3,1.5).$}
\end{figure}
\begin{figure}
\centering
  \includegraphics[width=10cm,height=6cm]{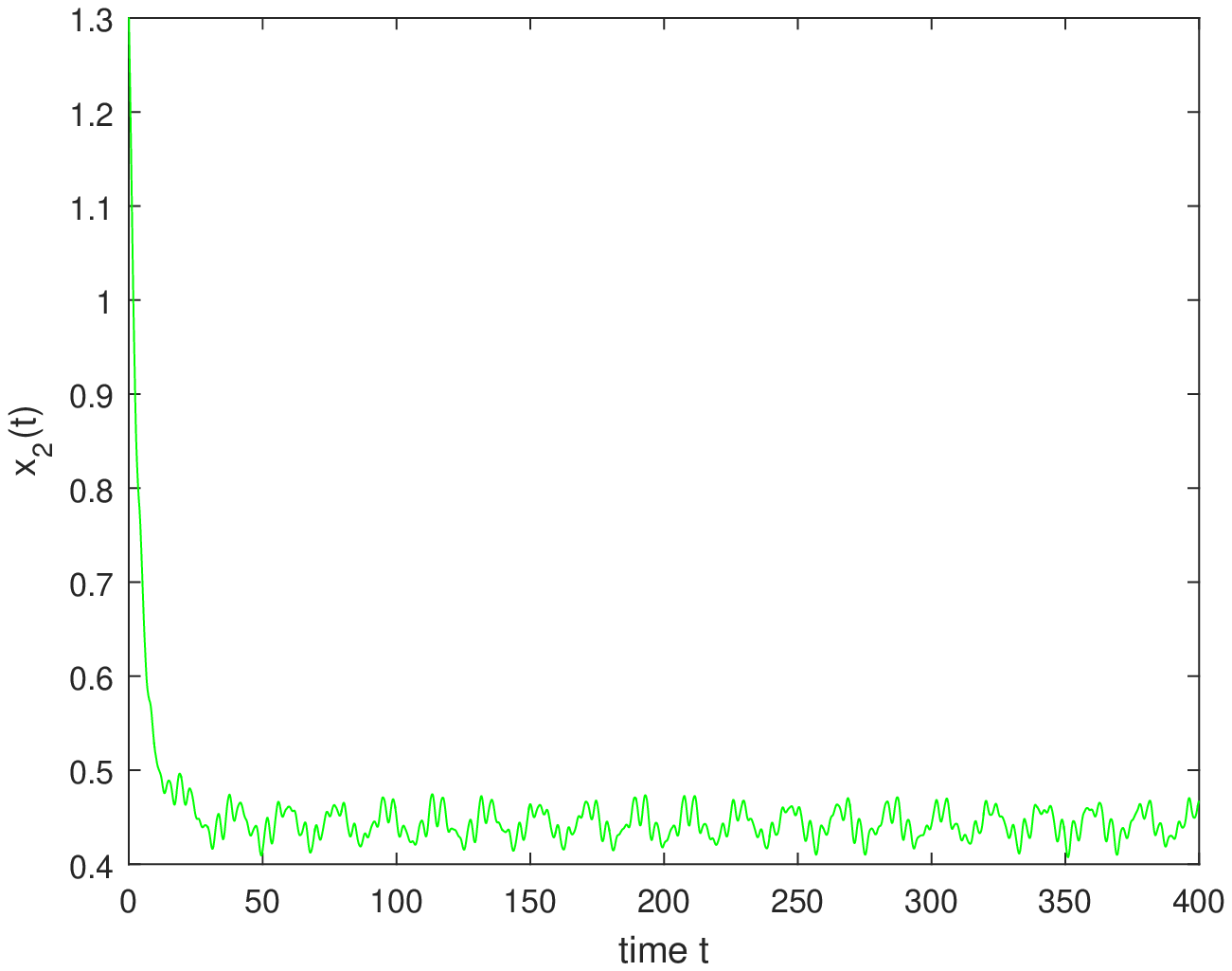}
  \caption{$\mathbb{T}=\mathbb{R}.$ Numerical solution $x_2(t)$ of system (4.1) for $(\varphi_1(t),\varphi_2(t),\varphi_3(t))=(1.3,1.3,1.5).$}
\end{figure}
\begin{figure}
\centering
  \includegraphics[width=10cm,height=6cm]{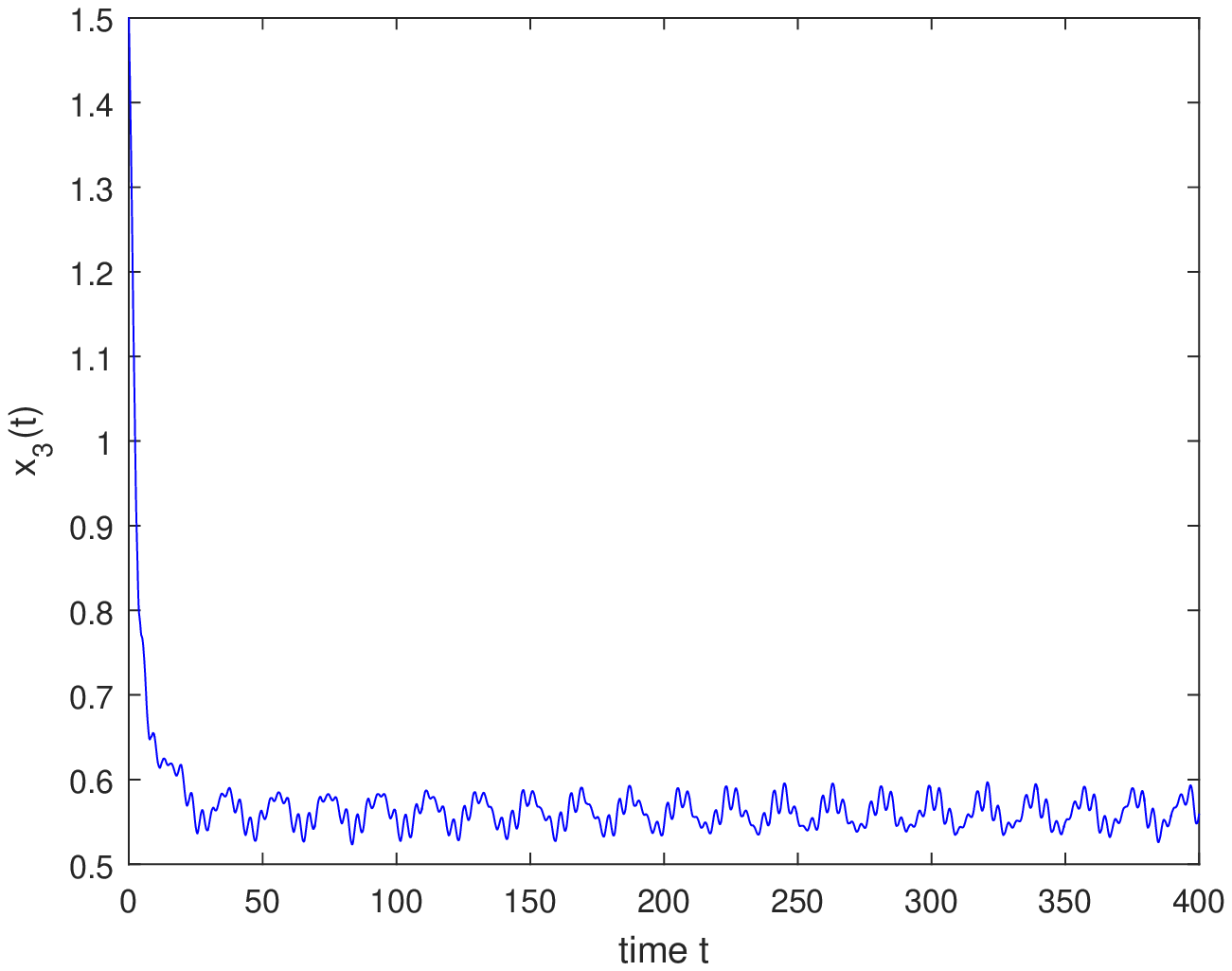}%
  \caption{$\mathbb{T}=\mathbb{R}.$ Numerical solution $x_3(t)$ of system (4.1) for $(\varphi_1(t),\varphi_2(t),\varphi_3(t))=(1.3,1.3,1.5).$}
\end{figure}

\begin{figure}
  \centering
  \includegraphics[width=10cm,height=6cm]{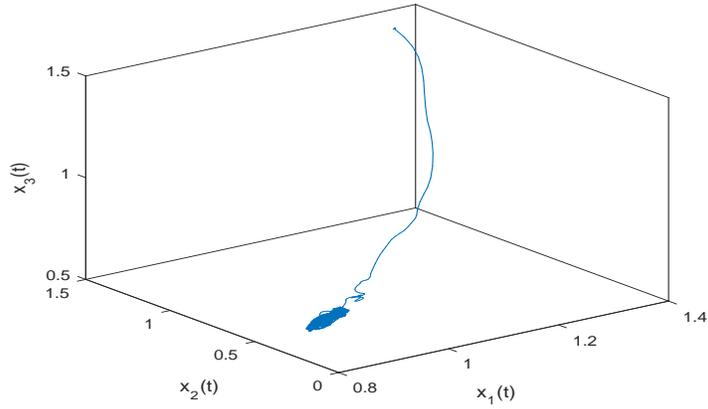}
  \caption{Continuous situation $(\mathbb{T}=\mathbb{R}): x_1(t), x_2(t),x_3(t)$.}
\end{figure}
\begin{figure}
  \centering
  \includegraphics[width=9cm,height=5cm]{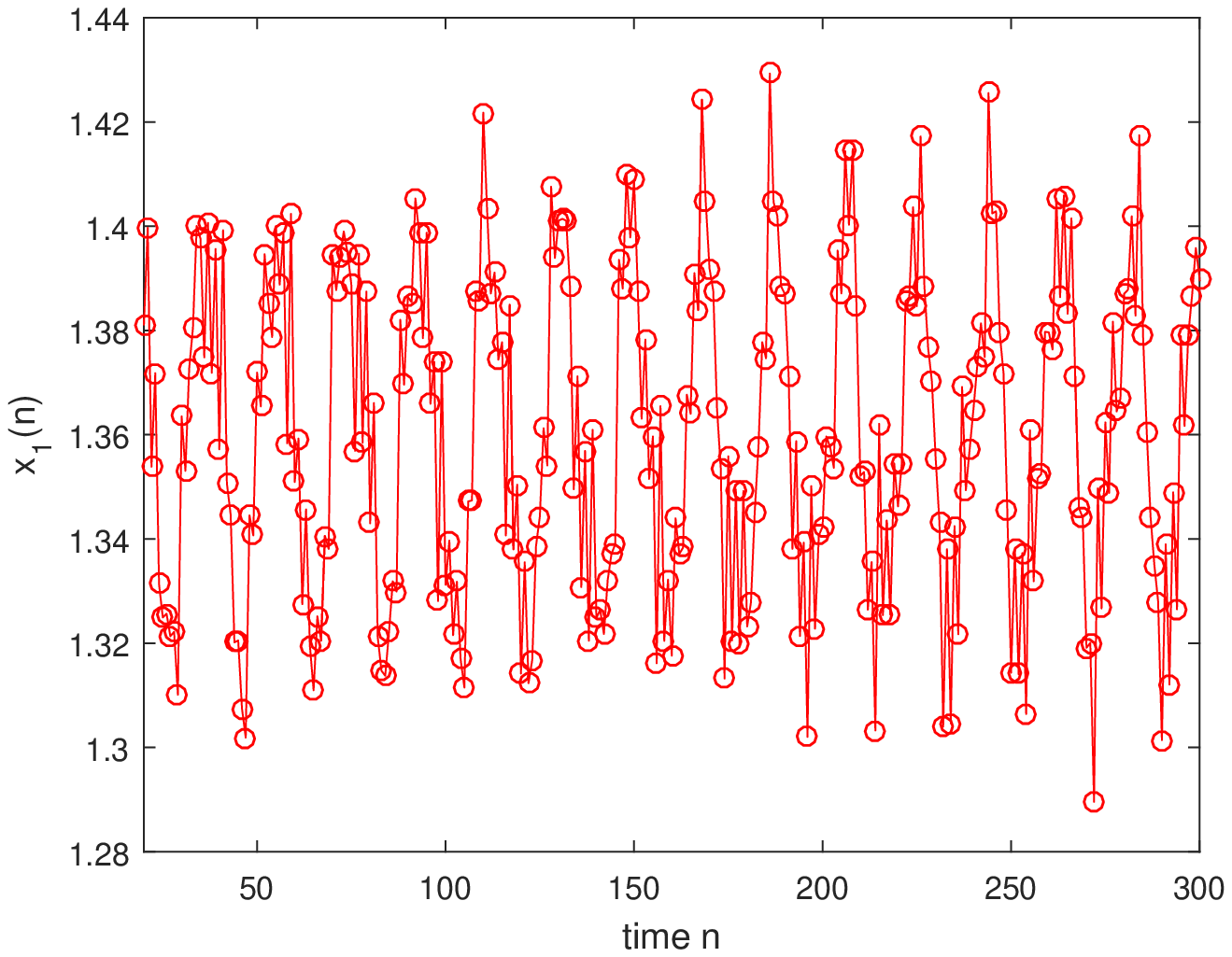}
  \caption{$\mathbb{T}=\mathbb{Z}.$ Numerical solution $x_1(n)$ of system (4.1) for $(\varphi_1(n),\varphi_2(n),\varphi_3(n))=(1.2,1.2,2.3).$}
\end{figure}
\begin{figure}
\centering
  \includegraphics[width=9cm,height=5cm]{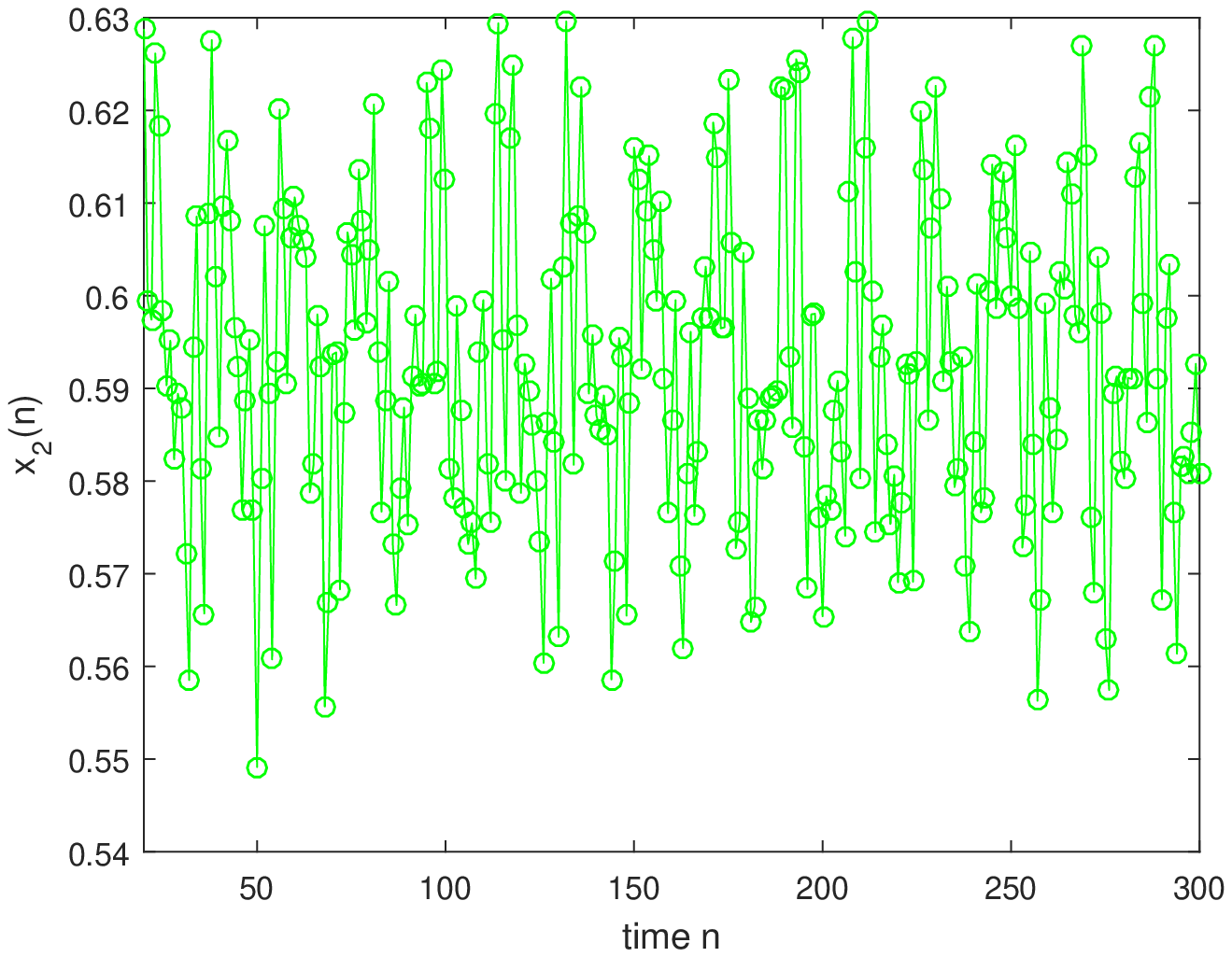}
  \caption{$\mathbb{T}=\mathbb{Z}.$ Numerical solution $x_2(n)$ of system (4.1) for $(\varphi_1(n),\varphi_2(n),\varphi_3(n))=(1.2,1.2,2.3).$}
\end{figure}
\begin{figure}
\centering
  \includegraphics[width=9cm,height=5cm]{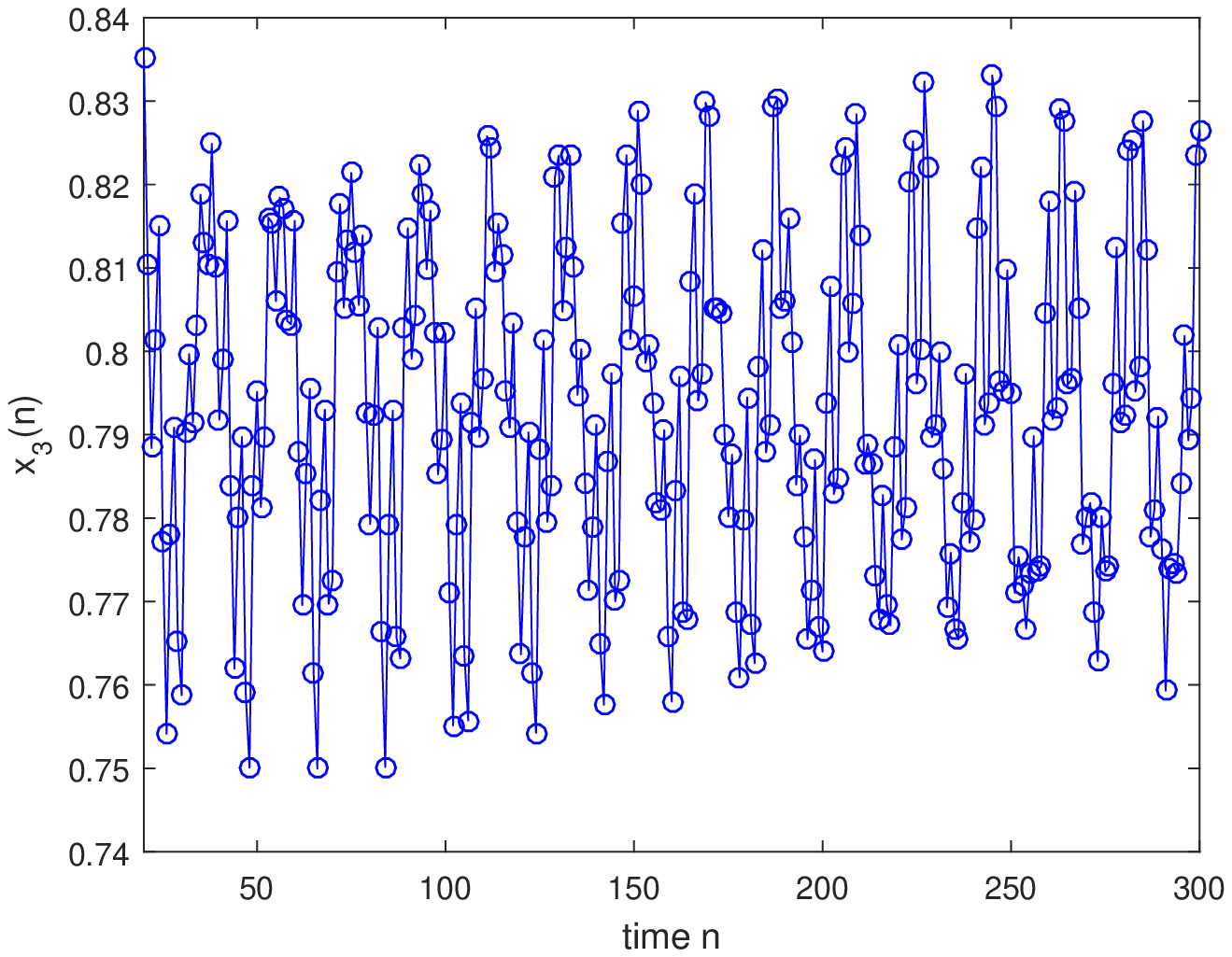}%
  \caption{$\mathbb{T}=\mathbb{Z}.$ Numerical solution $x_3(n)$ of system (4.1) for $(\varphi_1(t),\varphi_2(t),\varphi_3(n))=(1.2,1.2,2.3).$}
\end{figure}

\begin{figure}
  \centering
  \includegraphics[width=9cm,height=5cm]{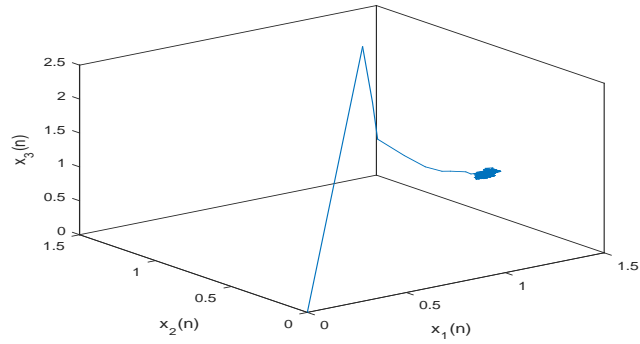}
  \caption{Discrete situation $(\mathbb{T}=\mathbb{R}): x_1(n), x_2(n),x_3(n)$.}
\end{figure}

\newpage
\section{Conclusion}
\indent

In this paper, we proposed a new  concept  of almost periodic time scales, two new definitions of almost periodic functions on time scales and  investigated
some  basic properties of them, which can unify the continuous and the  discrete cases effectively.
As an application,   we obtain some sufficient conditions for
the existence and  exponential stability of  positive almost periodic solutions
for a class of Nicholson's blowflies models on time scales.
 Our methods and results of this paper may be used to study almost periodicity of general dynamic equations on time scales.
Besides, based on our this new  concept  of almost periodic time scales, one  can further study  the problems of pseudo almost periodic functions,
 pseudo almost automorphic functions   and   pseudo almost periodic set-valued functions  on times as well as  the problems of pseudo almost periodic,
 pseudo almost automorphic    and  pseudo almost periodic set-valued dynamic systems  on times and so on.


\begin{thebibliography}{}
\bibitem{1}A.J. Nicholson, An outline of the dynamics of animal populations, Aust. J. Zool. 2 (1954) 9-65.

\bibitem{2}W.S.C. Gurney, S.P. Blythe, R.M. Nisbet, Nicholson's blowflies revisited, Nature 287 (1980) 17-21.


\bibitem{v5} Y. Chen, Periodic solutions of delayed periodic Nicholson¡¯s blowflies models, Can. Appl.
Math. Q. 11 (2003)  23-28.

\bibitem{v12}  J. Li, C. Du, Existence of positive periodic solutions for a generalized Nicholson¡¯s
blowflies model, J. Comput. Appl. Math. 221 (2008)  226-233.


\bibitem{liu2} B.W. Liu, Global exponential stability of positive periodic solutions for a delayed Nicholson¡¯s blowflies model, J. Math. Anal. Appl. 412 (2014) 212-221.

\bibitem{v24} S. Saker, S. Agarwal, Oscillation and global attractivity in a periodic Nicholson¡¯s
blowflies model, Math. Comput. Modelling 35 (2002)  719-731.

\bibitem{v32}  Q. Zhou, The positive periodic solution for Nicholson-type delay system with linear
harvesting terms, Appl. Math. Modelling 37 (2013)  5581-5590.


\bibitem{3}J.W. Li, C.X. Du, Existence of positive periodic solutions for a generalized Nicholson's blowflies model,
 J. Comput. Appl. Math. 221 (2008) 226-233.

\bibitem{4}T.S. Yi, X. Zou, Global attractivity of the diffusive Nicholson blowflies equation with Neumann boundary condition: A non-monotone case, J. Differential Equations 245 (11) (2008) 3376-3388.

\bibitem{liu} B. Liu, S. Gong, Permanence for Nicholson-type delay systems with nonlinear density-dependent mortality terms, Nonlinear Anal. Real World Appl. 12 (2011) 1931-1937.

\bibitem{5} B.W. Liu, Global stability of a class of Nicholson's blowflies model with patch structure and multiple time-varying delays, Nonlinear Anal. Real World Appl. 11 (2010) 2557-2562.

\bibitem{6}J.Y. Shao, Global exponential stability of non-autonomous Nicholson-type delay systems, Nonlinear Anal. Real World Appl. 13 (2012) 790-793.


\bibitem{7}L. Berezansky, L. Idels, L. Troib, Global dynamics of Nicholson-type delay systems with applications, Nonlinear Anal. Real World Appl. 12 (1) (2011) 436-445.

\bibitem{8}W.T. Wang, L.J. Wang, W. Chen, Existence and exponential stability of positive almost periodic solution for Nicholson-type delay systems, Nonlinear Anal. Real World Appl. 12 (2011) 1938-1949.

\bibitem{9}T. Faria, Global asymptotic behaviour for a Nicholson model with patch structure and multiple delays, Nonlinear Anal. 74 (2011) 7033-7046.

\bibitem{10}J.O. Alzabut, Almost periodic solutions for an impulsive delay Nicholson's blowflies model, J. Comput. Appl. Math. 234 (2010) 233-239.

\bibitem{11}W. Chen, B.W. Liu, Positive almost periodic solution for a class of Nicholson's blowflies model with multiple time-varying delays, J. Comput. Appl. Math. 235 (2011) 2090-2097.

\bibitem{12}F. Long, Positive almost periodic solution for a class of Nicholson's blowflies model with a linear harvesting term, Nonlinear Anal. Real World Appl. 13 (2012) 686-693.

\bibitem{13}L.J. Wang, Almost periodic solution for Nicholson's blowflies model with patch structure and linear harvesting terms, Appl. Math. Modelling 37   (2013) 2153-2165.

\bibitem{v18}  X. Liu, J. Meng, The positive almost periodic solution for Nicholson-type delay systems with linear harvesting terms, Appl. Math. Modelling 36 (2012)  3289-3298.
\bibitem{ni1}Y.L. Xu, Existence and global exponential stability of
positive almost periodic solutions for a
delayed Nicholson's blowflies model, J. Korean Math. Soc. 51 (2014)  473-493.

\bibitem{ni2} B.W. Liu,
Positive periodic solutions for a nonlinear density-dependent mortality Nicholson¡¯s blowflies model, Kodai Math. J.
37 (2014) 157-173.


\bibitem{ni3} H.S. Ding, J. Alzabut, Existence of positive almost periodic solutions for a Nicholson's blowflies model, Electron. J. Diff. Equ.  2015 (180) (2015)  1-6.


\bibitem{14}Z.J. Yao, Existence and exponential convergence of almost periodic positive solution for Nicholson's blowflies discrete model with linear harvesting term, Math. Meth. Appl. Sci. 37 (2014) 2354-2362.


\bibitem{15}J.O. Alzabut, Existence and exponential convergence of almost periodic aolutions for a discrete Nicholson's blowflies model with nonlinear harvesting term, Math. Sci. Lett. 2(3) (2013) 201-207.

\bibitem{16} S. Hilger, Analysis on measure chains--a unified approach to continuous and discrete calculus, Results Math. 18 (1990) 18-56.

\bibitem{17}Y.K. Li, L. Yang, Existence and stability of almost periodic solutions for Nicholson's blowflies models with patch structure and linear harvesting terms on time scales,  Asian-European J. Math. 5 (3) (2012) 1250038 (14 pages).


\bibitem{li1} Y.K. Li, C. Wang, Uniformly almost periodic functions and almost periodic solutions to dynamic equations on time scales, Abstr. Appl. Anal. 2011 (2011), Article ID 341520, 22 pages.

\bibitem{li3} Y. Li, C. Wang, Pseudo almost periodic functions and pseudo almost periodic solutions to dynamic equations on time scales,  Adv. Difference Equ.   2012, 2012:77.



\bibitem{r1} C. Lizama, J.G. Mesquita, Almost automorphic solutions of dynamic equations on time scales, J. Funct. Anal. 265 (2013) 2267-2311.

\bibitem{li2} C. Wang, Y. Li, Weighted pseudo almost automorphic functions with applications to abstract dynamic equations on time scales, Ann. Polon. Math.  108 (2013) 225-240.

\bibitem{w1} C. Wang, R.P. Agarwal, Weighted piecewise pseudo almost automorphic functions with applications to abstract impulsive dynamic equations on time scales,  Adv. Difference Equ.   2014, 2014:153.

\bibitem{set} S.H. Hong, Y.Z. Peng, Almost periodicity of set-valued functions and set dynamic equations on time scales, Information Sciences 330 (2016) 157-174.

\bibitem{r2}C. Lizama, J.G. Mesquita, Asymptotically almost automorphic solutions of dynamic equations on time scales, J. Math. Anal. Appl. 407 (2013) 339-349.

\bibitem{r5} C. Lizama, J.G. Mesquita, R. Ponce, A connection between almost periodic functions defined on timescales and $\mathbb{R}$, Applic. Anal. 93 (2014) 2547-2558.

\bibitem{r6} Y. Li, L. Yang, Almost automorphic solution for neutral type high-order Hopfield neural networks with delays in leakage terms on time scales, Appl. Math. Comput. 242 (2014) 679-693.

\bibitem{r11} T. Liang, Y. Yang, Y. Liu,  L. Li,   Existence and global exponential stability of almost periodic solutions to Cohen-Grossberg neural networks with distributed delays on time scales, Neurocomputing 123 (2014) 207-215.

\bibitem{r12}J. Gao, Q.R. Wang,  L.W. Zhang,  Existence and stability of almost-periodic solutions for cellular neural networks with time-varying delays in leakage terms on time scales, Appl. Math. Compu. 237 (2014) 639-649.

\bibitem{r13}Z. Yao, Existence and global exponential stability of an almost periodic solution for a host-macroparasite equation on time scales, Adv. Difference Equ. 2015, 2015:41.

\bibitem{r14} G. Mophou, G.M. N'Gu\'{e}r\'{e}kata, A. Milce, Almost automorphic functions of order and applications to dynamic equations on time scales,  Discrete Dyn. Nat. Soc. 2014 (2014), Article ID 410210, 13 pages.

\bibitem{r15} H. Zhou, Z. Zhou, W. Jiang, Almost periodic solutions for neutral type BAM neural networks with distributed leakage delays on time scales, Neurocomputing 157 (2015)  223-230.

\bibitem{liwang}Y.K. Li, C. Wang, Almost periodic functions on time scales and applications,   Discrete Dyn. Nat. Soc. 2011 (2011), Article ID 727068, 20 pages.

\bibitem{wa}  C. Wang, R.P. Agarwal, A further study of almost periodic
time scales with some notes and applications, Abstr.
Appl. Anal.   2014 (2014), Article ID 267384, 11 pages.

\bibitem{lili1} Y.K. Li, B. Li, Almost periodic time scales and almost periodic
functions on time scales, J. Appl. Math. 2015 (2015), Article ID 730672, 8 pages.


\bibitem{lili3} Y.K. Li, L.L. Zhao,   L. Yang, $C^1$-Almost periodic solutions of BAM
neural networks with time-varying delays on time scales,  The Scientific World J.  2015 (2015), Article ID 727329, 15
pages.

\bibitem{lili2} Y.K. Li, B. Li, X.F. Meng, Almost automorphic funtions on time scales and almost automorphic solutions to shunting inhibitory cellular neural networks on time scales, J. Nonlinear Sci. Appl. 8 (2015) 1190-1211.


\bibitem{18}M. Bohner, A. Peterson, Dynamic Equations on Time
Scales, An Introduction with Applications, Boston: Birkh\"{a}user; 2001.

\bibitem{fink}  A.M. Fink, Almost Periodic Differential Equations, Springer-Verlag, Berlin, 1974.

\bibitem{gz1}  A.M. Fink, G. Seifert, Liapunov functions and almost periodic solutions for almost periodic systems, J. Differential Equations 5 (1969) 307-313.

\bibitem{gz2}C. David, M. Cristina, Invariant manifolds, global attractors and almost periodic solutions of nonautonomous defference equations, Nonlinear Anal. 56(4) (2004) 465-484.



\bibitem{jack}   J.K. Hale, S.M. Verduyn Lunel, Introduction to Functional Differential Equations, Springer-Verlag, New York, 1993.



\end{thebibliography}
\end{document}